\pgfplotsset{compat=newest}
\numberwithin{equation}{section}
\numberwithin{figure}{section}
\newtheorem{theorem}{Theorem}[section]
\newtheorem{prop}[theorem]{Proposition}
\newtheorem{lemma}[theorem]{Lemma}
\theoremstyle{definition}
\newtheorem{remark}[theorem]{Remark}
\newcommand{\dsp}[1]{\displaystyle{#1}}
\newcommand{\txt}[1]{\textstyle{#1}}
\newcommand{\nn}{\nonumber}
\newcommand{\beq}{\begin{equation}}
\newcommand{\eeq}{\end{equation}}
\newcommand{\vect}[1]{\begin{pmatrix}#1\end{pmatrix}}
\newcommand{\mb}[1]{\ensuremath{\mathbb{#1}}}
\newcommand{\RR}{\mb{R}}
\newcommand{\dint}{\mathrm{d}}
\newcommand{\dds}{\frac{d}{ds}}
\newcommand{\A}{\mathscr{A}}
\renewcommand{\P}{\mathscr{P}}
\newcommand{\veps}{\mu}
\def\intfull#1{\int_0^{2\pi}#1\,\dint s}
\newcommand{\torusL}{\mathbb{T}_{L}}
\newcommand{\torustwopi}{\mathbb{T}_{2\pi}}
\title{{\bf Bifurcation of elastic curves with}\\
{\bf modulated stiffness}}
\author{Katharina Brazda$^1$,
Gaspard Jankowiak$^2$,
Christian Schmeiser$^1$,
Ulisse Stefanelli$^{1,3,4}$
\vspace{1.5em}\\
\footnotesize{$^1$University of Vienna, Faculty of Mathematics, Oskar-Morgenstern-Platz 1, 1090 Wien, Austria}\\
\footnotesize{$^2$Radon Institute for Applied and Computational Mathematics, Altenbergerstr. 69, 4040, Linz, Austria}\\
\footnotesize{$^3$Vienna Research Platform on Accelerating
Photoreaction Discovery, University of Vienna, W\"ahringerstra\ss e 17, 1090 Wien, Austria}\\
\footnotesize{$^4$Istituto di Matematica Applicata e Tecnologie Informatiche \emph{E. Magenes}, via Ferrata 1, I-27100 Pavia, Italy}}
\date{\today}
\begin{document}

\maketitle

\begin{abstract} We investigate the equilibrium configurations of
  closed planar elastic curves of fixed length, whose stiffness,
  also known as the bending rigidity, depends on an additional density variable. The
  underlying variational model relies on the minimization of a bending
  energy with respect to shape and density and can be considered as a
  one-dimensional analogue of the Canham-Helfrich model for
  heterogeneous biological membranes. We present a generalized
  Euler-Bernoulli elastica functional featuring a density-dependent
  stiffness coefficient. In order to treat the inherent nonconvexity
  of the problem we introduce an additional length scale in the
  model by means of a density gradient term. We derive
  the system of Euler-Lagrange equations and study the bifurcation
  structure of solutions with respect to the model parameters.
  Both analytical and numerical results are presented.
\end{abstract}

\par\emph{MSC 2020: }
% 35-XX Partial differential equations
%
% 35Jxx    % Elliptic equations and elliptic systems
  35J20,   % Variational methods for second-order elliptic equations
%
% 35Bxx Qualitative properties of solutions to partial differential equations
  35B38,   % Critical points of functionals in context of PDEs
  35B32,   % Bifurcations in context of PDEs
  35B36,   % Pattern formations in context of PDEs
%
% 74-XX Mechanics of deformable solids
%
% 74Gxx Equilibrium (steady-state) problems in solid mechanics
  74G65,   % Energy minimization in equilibrium problems in solid mechanics
%
% 74Kxx Thin bodies, structures
  74K10    % Rods (beams, columns, shafts, arches, rings, etc.)

\par\emph{Keywords: Canham-Helfrich energy, elastic curves, energy minimization, stationary points, pitchfork bifurcation}

\setcounter{tocdepth}{2}
\tableofcontents

%%%%%%%%%%%%%%%%%%%%%%%%%%%%%%%%%%%%%%%%%%%%%%%%%%%%%%%%%%%%%%%%%%%%%%%%%%%%
\section{Introduction}
%%%%%%%%%%%%%%%%%%%%%%%%%%%%%%%%%%%%%%%%%%%%%%%%%%%%%%%%%%%%%%%%%%%%%%%%%%%%
We investigate the equilibrium configurations of elastic curves featuring an additional scalar density variable
which influences the bending rigidity. Our interest is
motivated by the variational modelization of the % variational model for the
 \emph{shapes of biological membranes}, originally proposed by \textsc{Canham} \cite{canham_minimum_1970} and \textsc{Helfrich} \cite{helfrich_elastic_1973} to explain the characteristic biconcave shape of a human red blood cell. According to this model, the equilibrium membrane shape $\Sigma$ minimizes the bending energy
\begin{equation*}
E_{\mathrm{CH}}(\Sigma)=\int_\Sigma \left(\frac{\beta}{2}\,(H-H_0)^2+\gamma\, K\right)\dint S
\end{equation*}
under suitable constraints on membrane area and enclosed volume.
Here, $\Sigma$ is a smooth closed surface embedded in $\RR^3$, $H$
and $K$ are the mean and the Gauss curvature of
$\Sigma$, respectively, %$K$ is the Gauss curvature of
                                %$\Sigma$,
and the material parameters comprise the stiffnesses (bending
rigidities) $\beta>0$, $\gamma<0$ as well as the spontaneous curvature
$H_0\in\RR$. The material parameters of heterogeneous biomembranes
are assumed to  depend on the variable membrane composition,
which is described by a scalar function $\rho\colon
\Sigma\to\RR$ which we interpret as a density of fixed total
mass. On the other hand, the geometry of the membrane influences
the distribution of the density $\rho$, which originates a %Yet,
                                %the composition of heterogeneous
                                %membranes influences their shapes in
                                %an additional way. In fact,
                                %biomembranes mechanically behave as
                                %two-dimensional viscous
                                %incompressible fluids and the
                                %membrane constituents will freely
                                %rearrange themselves to better match
                                %the membrane geometry. Due to this
  \emph{coupling effect between curvature and composition}.
  Indeed, the energy for heterogeneous biomembranes has to be
  minimized with respect to both membrane geometry $\Sigma$ and
  composition $\rho$ simultaneously. Configurations featuring this coupling have been experimentally observed for example by \textsc{Baumgart, Hess, \& Webb} \cite{baumgart_imaging_2003} in case of giant unilamellar vesicles. Furthermore, the coupling effect also plays an essential role in the dynamic morphology changes of cells, where special curved membrane proteins are involved, cf.\ \textsc{McMahon \& Gallop}~\cite{mcmahon_membrane_2005}.

Results on the mathematical analysis of the variational problem for heterogeneous biomembranes have been obtained %for membranes whose material parameters take constant values within multiple sharply separated domains, the so-called phases. In this case, the membrane energy possesses an additional line-tension contribution at phase interfaces.
by \textsc{Choksi, Morandotti, \& Veneroni} \cite{choksi_global_2013} and
\textsc{Helmers} \cite{helmers_convergence_2015}, who proved the
existence of multiphase minimizers in the axisymmetric
regime. By dropping the symmetry restriction, existence of
multiphase minimizers has been  recently obtained by
\cite{brazda_existence_2020} in the weak setting of {\it varifolds}.
For a collection of recent results on both single- and multiphase Canham-Helfrich models the reader is referred to \cite{barrett_ea_2018_twophase, brazda_existence_2020, deckelnick_ea_2021_bvprevolution, eichmann_2020_lsc, elliott_ea_2020_domain, lussardi_2019_mesoscopic, mondino_existence_2020, peletier_ea_2009_bilayers, wojtowytsch_2019_constrained}.

To the best of our knowledge, proving existence of
minimizers for membranes featuring  continuous phase densities and
general material parameter models is an open problem. We move a
first step in this direction in the present paper, by focusing on the lower-dimensional
setting of curves instead.  %hence, we address this issue in the one-dimensional setting of elastic curves.
A classical elastic curve in the plane, $\gamma\colon [0,L]\to\RR^2$,
minimizes the {\sc Euler-Bernoulli} elastic bending energy (also known as the \textsc{Willmore} energy)
\[
E(\gamma)=\frac{1}{2}\int_\gamma\kappa^2\,\dint s,
\]
where $\kappa$ is the scalar curvature of $\gamma$. The stationary points are called \emph{elasticae} and can be analytically described in terms of elliptic functions. As was already clear to Euler, the only closed elasticae of fixed length in the plane are the circle and Bernoulli's Figure-8 curve, the single covered circle being the unique global minimizer of $E$, see for example \textsc{Truesdell} \cite{truesdell_elasticity_1983} and \textsc{Langer \& Singer} \cite{langer_curve_1985}.

We now modify the setting by taking the % Inspired by the
                                % variational model for heterogeneous
                                % biological membranes, we are
                                % interested in the effects caused by
                                % an
additional scalar density $\rho$ into the picture. The density $\rho$ modulates the elastic behavior of the curve. For this purpose we consider the following elastic bending energy with \emph{density-modulated stiffness},
\[
E_0(\rho,\gamma)=\frac{1}{2}\int_\gamma\beta(\rho)\,\kappa^2\dint s.
\]
%cf.\ \eqref{eq:Ega}.
Our interest lies on the effects of the variable stiffness $\beta$
and we dispense with the %. Therefore, we do not consider a
                              %one-dimensional counterpart for the
spontaneous curvature $H_0$, for simplicity.
In order to take into account the coupling between shape and
composition, we have to minimize $E_0$ with respect to both $\gamma$
and $\rho$. Admissible curves $\gamma$ are asked to be
planar, regular, $C^1$-closed, and have fixed length $L$, whereas
admissible densities $\rho$ are required to have fixed mass
$\int_\gamma\rho\,\dint s=M$.

The application of the Direct Method for the minimization of $E_0$
calls for checking lower semicontinuity with respect to weak topologies,
which in turn asks for the convexity of the
integrand of $E_0$. Yet, if such convexity is imposed, only
the trivial minimizer exists, namely the constant density $\rho_0=M/L$
on a circle with curvature $\kappa_0=2\pi/L$. This however is insufficient
for describing % not correspond to
the rich geometric morphologies that can be observed in
biological membranes.

In the following, we will therefore not assume convexity
of % relax the convexity constraint and
the integrand of $E_0$. This lack of convexity may however lead to nonexistence of
minimizers,  see Section \ref{sec:existence} below.  We are hence
forced to consider a
regularized %  and consider the following regularized
energy $E_\mu$, featuring an additional length scale in terms of
a gradient term in $\rho$, namely,
\beq\label{eq:Ereg}
E_\veps(\rho,\gamma)=\frac{1}{2}\int_\gamma\left(\beta(\rho)\,\kappa^2+\veps\,\dot\rho^2\right)\dint s,
\eeq
where $\dot \rho := \frac{\mathrm{d}}{\mathrm{d} s} \rho$.
The parameter $\mu$ may be physically interpreted as the diffusivity of
the density, cf.\ \eqref{eq:Ega}.
For $\mu$ large, the
only minimizer is the trivial one, see Proposition \ref{thm:trivial_unique_large_epsilon}. By lowering $\mu$ one observes the
onset of bifurcations from the trivial state. The main focus of this paper is the
rigorous  bifurcation analysis in terms of $\mu$. We
analytically classify the bifurcation behavior of solutions
of the Euler-Lagrange equations of %minima of
$E_\veps$. Moreover, we provide an exhaustive suite of numerical experiments, illustrating the distinguished patterning of minimizers of $E_\mu$, depending on $\mu$.% (see Figure \ref{fig:motivation}).

A variational model for planar elastic curves with density has also been studied by \textsc{Helmers} \cite{helmers_snapping_2011}. He focused on the effect of spontaneous curvature and established a $\Gamma$-convergence result to the sharp interface limit. Let us mention also the recent work by \textsc{Palmer \& P\'ampano} \cite{palmer_anisotropic_2020}, who presented analysis and numerics for the shapes of elastic rods with anisotropic bending energies.

We conclude this introduction by presenting the outline of the paper. In Section \ref{sec:background}, we briefly describe the mathematical setting and explain our notation. Section \ref{sec:existence} is devoted to the justification of our model by existence and nonexistence results for minimizers. In Section \ref{sec:bifurcation} we analytically discuss the local bifurcation structure of solutions to the associated Euler-Lagrange equations. Numerical results for the bifurcation branches as well as for the configurations of the curves are presented in Section \ref{sec:numerics}. Finally, Section \ref{sec:conclusion} summarizes our findings.

%\begin{figure}
%\centering
%\hspace{2em}
%\includegraphics[width=0.22\textwidth]{imgs/Circle}
%\hspace{2em}
%\includegraphics[width=0.3\textwidth]{imgs/Bifurcation_mu}
%\hspace{-1em}
%\includegraphics[trim=200 40 200 40,clip,width=0.37\textwidth]{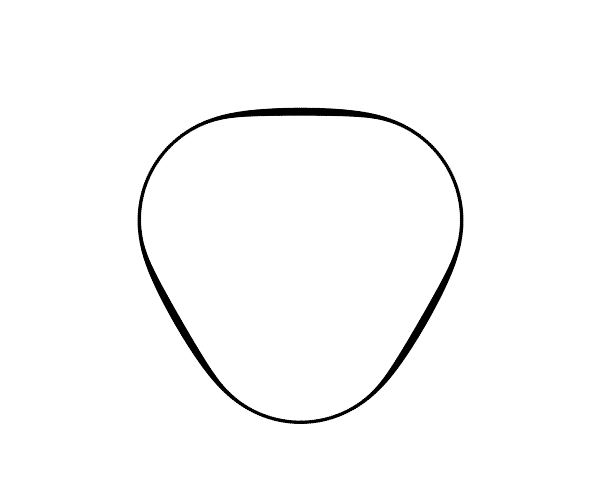}
%%\includegraphics[width=0.22\textwidth]{Curve_Baumgart2003.jpg}
%\caption{Illustration of the expected bifurcation behavior of a minimizer of $E_\veps$ \eqref{eq:Ereg}. In the diagram, $u$ represents the deviation from the trivial solution given by a circle with constant density (left). As $\veps$ decreases and $1/\veps$ passes through $1/\veps_0$, the solution is expected to develop nontrivial branches (with flip symmetry around the trivial axis). The associated nontrivial solutions (right; cf.\ Figure \ref{fig:numerics shapes}) are expected to be similar to cross-sections of heterogeneous biological membrane configurations, which are  observed experimentally (e.g.\ \textsc{Baumgart, Hess, \& Webb} \cite{baumgart_imaging_2003}).}
%\label{fig:motivation}
%\end{figure}

%%%%%%%%%%%%%%%%%%%%%%%%%%%%%%%%%%%%%%%%%%%%%%%%%
\section{Mathematical setting}\label{sec:background}
%%%%%%%%%%%%%%%%%%%%%%%%%%%%%%%%%%%%%%%%%%%%%%%%%

We devote this section to make the mathematical setting precise and fix notation.
%and comment on the existence and nonexistence issues that have been mentioned in the Introduction.

%%%%%%%%%%%%%%%%%%%%%%%%%%%%%%%%%%%%%%%%%%%%%%%%%
\subsection{Notation and preliminaries on curves}
We collect some basic information on curves \cite{do_carmo_differential_1976}. In the following, we will consider \emph{closed planar curves} $\gamma\in H^2(\torusL)^2$, where $\torusL := \mathbb{R}/L\mathbb{Z}$ is the one-dimensional torus with period $L>0$. The fact that $H^2(\torusL)\subset C^1(\torusL)$ ensures that $\gamma\colon[0,L]\to\mathbb{R}^2$ represents a $C^1$-closed curve and $\gamma(0)=\gamma(L)$ and $\dot\gamma(0)=\dot\gamma(L)$.
We systematically assume $\gamma$ to be parametrized by arc-length $s$,  namely, $|\dot\gamma| =1$.
This induces that $\ddot\gamma\in L^2(\torusL)^2$ is orthogonal to $\dot\gamma$. The normal vector $n$ to the curve is defined pointwise by counterclockwise rotating $\dot\gamma$ by  $\pi/2$. That is, by denoting $\gamma(s) = (x(s), y(s))$, $n(s)=\dot\gamma(s)^\bot:=(-\dot y(s), \dot x(s))$.
The rate of change of $\dot\gamma$ in direction $n$ is measured by the scalar \emph{curvature}
$\kappa=n\cdot\ddot\gamma=\det(\dot\gamma,\ddot\gamma)\in L^2(\torusL)$ of the curve, so that
$\ddot\gamma=(n\cdot\ddot\gamma)\,n=\kappa\,n$.

The \emph{inclination angle} $\theta\in L^2(\torusL)$ is the angle between the $x$-axis and the tangent $\dot\gamma$, that is $\dot\gamma=(\dot x,\dot y) = (\cos\theta,\sin\theta)$. Note that in even for smooth $\gamma$, $\theta$ is discontinuous on $\torusL$. However, 
the map $s\mapsto \theta(s) - \frac{2\pi}{L}\, I\, s$ is an element of $H^1(\torusL)$,
%$\theta - I\, s \in H^1(\torusL)$, 
where the \emph{rotation index} of the curve $I\in \mathbb{Z}$ counts the number of complete turns of $\dot \gamma$ according to the standard orientation, see below.
The curvature function $\kappa\in L^2(\torusL)$ uniquely determines the curve $\gamma\in H^2(\torusL)^2$  up to translations and rotations in $\RR^2$ \cite[Section 1-5, pp.\ 19, 24, and Section 1-7, p.\ 36]{do_carmo_differential_1976}. In particular, if $|\dot\gamma|=1$, then
\beq
\label{eq:thetakappa}
\kappa=\dot\theta, \qquad \theta(s')=\theta(0)+\int_{0}^{s'}\kappa(s'')\,\dint s'',
\quad\text{and}\quad
\gamma(s)=\vect{x(s)\\ y(s)}
=\vect{x(0)\\ y(0)}+\int_0^s\vect{\cos\theta(s') \\ \sin\theta(s')}\dint s'.
\eeq
Identifying all curves whose images only differ by isometries in $\RR^2$, one may adapt the coordinate system to $x(0)=y(0)=\theta(0)=0$, corresponding indeed to the choice $\gamma(0)=(0,0)$ and $\dot\gamma(0)=(1,0)$.
%From this point on, in order to make the algebra more explicit, we will not work with $U$ directly, but with functions in $\Hper^k([0,L])$ where $k\in\mathbb{N}$ and
%\[
%    \Hper^k([0, L]) := \left\{f\colon\mathbb{R} \rightarrow \mathbb{R}\;;  f^{(l)} \in \Lper^2([0,L])\; \forall 0 \le l \le k \right\}\,,
%\]
%with $\Lper^p([0,L]) := \{ f\colon\mathbb{R} \rightarrow \mathbb{R}\;; f \text{ is $L$-periodic, } f|_{[0,L]} \in L^p([0,L]) \}$ for $p\in\mathbb{N}$.
A curve $\gamma\in H^2(\torusL)^2$
%$\gamma\in \Hper^2([0,L])^2$ 
parametrized by arc-length 
satisfies
%is $C^1$-closed and we have 
the following identities:
\begin{align*}
&0=\gamma(L)-\gamma(0)=\int_0^L\dot \gamma(s)\dint s=\int_0^L\vect{\cos\theta(s) \\ \sin\theta(s)}\dint s
=\int_0^L\vect{\cos\left(\theta(0)+\int_{0}^s\kappa(t)\,\dint t\right)
  \\ \sin\left(\theta(0)+\int_{0}^s\kappa(t)\,\dint t\right)}\dint s\,, \\
&{0=\dot\gamma(L)-\dot\gamma(0)=(\cos\theta(L) - \cos\theta(0),
  \sin\theta(L)-\sin\theta(0))}\,.
\end{align*}
The latter is equivalent to $\theta(L)-\theta(0)=\int_0^L\kappa(s)\,\dint s=2\pi \, I$.
A curve $\gamma\colon [0,L]\to\RR^2$ is called \emph{simple} if it is an injective map and \emph{regular}, if it is $C^1$ and $\dot\gamma(t)\neq 0$ for all $t\in[0,L]$.
By the {Theorem of Turning Tangents} \cite[Section 5-6, Theorem 2, p.\ 396]{do_carmo_differential_1976}, a simple $C^1$-closed regular planar positively oriented $C^1$ curve 
%$\gamma\colon[0,L]\to\RR^2$ 
has rotation index $I=1$.
%, that is, $\int_0^L\kappa(s)\,\dint s=2\pi$.
This allows us to represent a simple $C^1$-closed curve $\gamma\in
H^2(\torusL)^2$ parametrized by arc-length by its inclination angle
$\theta$, granted that $\theta - \frac{2\pi}{L}\, I\, s\in H^1(\torusL)$ and
\[
{\theta(0)=0, \ \ \theta(L)=2\pi
\quad\text{and}\quad
\int_0^L\vect{\cos\theta(s) \\ \sin\theta(s)}\dint s=0},
\]
or by its curvature $\kappa\in L^2(\torusL)$, additionally satisfying
\[
\dsp{\int_0^L\kappa(s)\,\dint s=2\pi}\quad
\text{and}
\quad \dsp{\int_0^L\vect{\cos\left(\int_{0}^s\kappa(t)\,\dint t\right)
\\
\sin\left(\int_{0}^s\kappa(t)\,\dint t\right)}\dint s=0}.
\]
Eventually, note that by requiring a planar curve to be closed restricts the possible curvature functions.
%$\kappa\colon[0,L]\to\RR$. 
According to the {Four Vertex Theorem}
\cite[Section 1-7, Theorem 2, p.\ 37]{do_carmo_differential_1976},
%{\color{red} here precise reference needed, e.g., no. of the Theorem}, done (K).
a smooth simple closed regular planar curve has either constant curvature (i.e.\ is a circle) or the curvature function possesses at least four vertices, i.e.\ two local minima and two local maxima.
The converse statement is given in \cite{dahlberg_converse_2005}: every continuous function which either is a nonzero constant or has at least four vertices is the curvature of a simple closed regular planar curve.

%%%%%%%%%%%%%%%%%%%%%%%%%%%%%%%%%%%%%%%%%%%%%%%%%
\subsection{Elastic energies with modulated stiffness}\label{ssec:model}

We consider planar curves $\gamma\in H^2(\torusL)^2$ parametrized by arc-length. With no loss of generality, we will assume from now on the length $L$ of the curve to be $2\pi$. The scalar density field $\rho\colon[0,2\pi]\to {\mathbb  R}$ is considered to be a function of the arc-length of the curve.
Moreover, we are given a density-modulated stiffness
\begin{equation}
 \label{beta}
  \beta \in C^2(\RR) \ \ \text{with} \ \ \inf \beta =:\beta_m>0.
\end{equation}
In the following, we will assume \eqref{beta} to hold throughout, without explicit mention. Note however that some results in this section are valid under weaker conditions on $\beta$ as well.

\emph{Admissible curves} are defined as elements of the set
\begin{align*}
%\label{eq:Acurve}
    \A & := \left\{\gamma\in H^2(\torustwopi)^2: \: |\dot\gamma|=1,\:\gamma(0)=\gamma(2\pi)=(0,0),\:\dot\gamma(0)=\dot\gamma(2\pi)=(1,0),\: \int_0^{2\pi}\det(\dot\gamma(s),\ddot\gamma(s))\,\dint s=2\pi \right\}.
\end{align*}
In particular, admissible curves are planar, arc-length parametrized, and $C^1$-closed.
Note that we are not enforcing injectivity of $\gamma$ (i.e.\ $\gamma$ being simple) and we just require the weaker condition   $I=1$. This simplifies our tractation, having no effect on the bifurcation result (Section \ref{sec:bifurcation}).

By the representation theorem for plane curves, any admissible curve $\gamma\in\A$ can be recovered from its inclination angle $\theta$ or its curvature $\kappa$. Correspondingly, we can equivalently indicate admissible curves as
\beq
\label{eq:Athetareg} %\label{eq:Atheta}
\A=\left\{\theta\in L^2(\torustwopi): \theta -  s \in H^1(\torustwopi), \,
 \int_0^{2\pi}\vect{\cos\theta(s) \\
    \sin\theta(s)}\dint s=\vect{0\\ 0},\:\theta(0)=0
    %, \:\theta(2\pi)=2\pi
    \right\}
\eeq
or
\[ % \label{eq:Akappa}
\A= \left\{\kappa\in L^2(\torustwopi): \: \int_0^{2\pi}\vect{\cos\left(\int_{0}^s\kappa(t)\dint t\right) \\ \sin\left(\int_{0}^s\kappa(t)\dint t\right)}\dint s=\vect{0\\ 0},\:\int_0^{2\pi}\kappa(s)\,\dint s=2\pi \right\}.
\]
The abuse of notation in defining the set $\A$ is motivated by the above-mentioned equivalence of the representations via $\gamma$, $\theta$, and $\kappa$, up to fixing
$\gamma(0)=(0,0)$ and $\dot \gamma(0)=(1,0)$ or $\theta(0)=0$.

\emph{Admissible densities} $\rho$ are asked to have fixed total mass. By possibly redefining $\beta$, one may assume such mass to be $2\pi$, which simplifies notation.
Given the parameter $\veps \in [0,\infty)$,
we define
\beq
\label{eq:Adensityreg} %\label{eq:Adensity}
\P:=\left\{\rho\in L^1(\torustwopi): \: \veps\rho \in H^1(\torustwopi),\: \int_0^{2\pi}\rho(s)\,\dint s=2\pi
%\:  \text{ for a.e.}\: s\in(0,2\pi)
 \right\}.
\eeq
For the sake of simplicity, we do not restrict the values of $\rho$ to be nonnegative, which would however be sensible, for $\rho$ is interpreted as a density. Note however that this simplification has no effect on the bifurcation results, which are actually addressing a neighborhood of the trivial state only, where $\rho$ is constant and positive.
%Moreover nonnegativity (and even boundedness) can be enforced by choosing $\beta$ appropriately, see Remark \ref{bedone} later.

%Given the parameter $\veps \in [0,\infty)$,
The \emph{elastic energy with modulated stiffness} is defined as
\beq
\label{eq:Ega}
E_\veps(\rho,\gamma):=\int_0^{2\pi} \left(\frac12 \beta(\rho) \ddot \gamma^2  + \frac{\veps}{2}
\dot \rho^2 \right) \dint s.
\eeq
Note that the energy $E_\veps$ can be equivalently rewritten as
$E_\veps(\rho,\gamma)=E_\veps(\rho,\theta)=E_\veps(\rho,\kappa)$, again by abusing notation.

We identify elastic curves with modulated stiffness as minimizers of $E_\veps$. In particular, we consider the following minimization problem
\beq
\label{eq:Egamin}
 \boxed{\min_{(\rho,\gamma)\in\P\times\A}E_\veps(\rho,\gamma).}
\eeq
In contrast to the classical Euler-Bernoulli model for elasticae \cite{langer_curve_1985, truesdell_elasticity_1983}, which is a purely geometric variational problem, here the density plays an active role in the selection of the optimal geometry.

%%%%%%%%%%%%%%%%%%%%%%%%%%%%%%%%%%%%%%%%%%%%%%%%%
\section{Existence and nonexistence}\label{sec:existence}
%%%%%%%%%%%%%%%%%%%%%%%%%%%%%%%%%%%%%%%%%%%%%%%%%

As mentioned in the Introduction, the minimization of $E_0$ turns out to be of limited interest. Indeed, if the integrand
\[
\Phi(\rho,\kappa) = \frac12 \beta(\rho) \kappa^2
\]
is strictly convex, problem  \eqref{eq:Egamin} for $\veps=0$ admits only the trivial solution
\beq \label{eq:trivial}
(\rho_0,\kappa_0):=(1,1).
\eeq
This can be directly checked  via Jensen's inequality by computing, for any $(\rho,\kappa)\in\P\times\A$,
\begin{align*}
  E_0(\rho,\kappa) = \int_0^{2\pi} \Phi(\rho,\kappa)
  \, \dint s \stackrel{\text{Jensen}}{\geq} 2\pi\,\Phi\left(
  \frac{1}{2\pi}\int_0^{2\pi} \rho \, \dint s,
  \frac{1}{2\pi}\int_0^{2\pi} \kappa \, \dint s \right) =
  2\pi\,\Phi(1,1) = E_0(\rho_0,\kappa_0)
\end{align*}
where the inequality is strict whenever $\rho$ or $\kappa$ are not constant, namely, whenever $(\rho,\kappa)\not =(\rho_0,\kappa_0)$. Let us mention that the integrand $\Phi$ is strictly
%monotone
convex
if and only if
\beq
\label{eq:strictlyconvex}
\beta''>0\qquad \text{and}\qquad \beta''\beta>2(\beta')^2.
\eeq

In order to allow the complex geometrical patterning of biological shapes to possibly be described by the minimization problem \eqref{eq:Egamin}, one is hence forced to dispense of \eqref{eq:strictlyconvex}, for in that case the only minimizer of $E_0$ (and, a fortiori $E_\veps$) would be the trivial one $(\rho_0,\kappa_0)$. In the setting of our bifurcation results, our choices for $\beta$ will then fulfill
\begin{equation}\label{beta2}
\beta''(\rho)\leq 0\qquad \text{or}\qquad
\beta''(\rho)\beta(\rho)\leq 2(\beta'(\rho))^2\qquad \text{for some} \
\ \rho\geq 0,
\end{equation}
at least in a neighborhood of the trivial state $\rho_0$.

On the other hand, lacking convexity of the integrand $\Phi$, the energy $E_0$  fails to be weakly lower semicontinuous on $\P \times\A$, e.g.\ \cite[Thm.\ 5.14]{fonseca_modern_2007},  and existence of minimizers may genuinely fail. We collect a remark in this direction in the following.

\begin{prop}[No minimizers for $E_0$] \label{non} Assume that
  \begin{equation}\label{mon}
\beta(0)<\beta(\rho)\quad \forall\rho>0.
\end{equation} Then, the minimization problem
  \eqref{eq:Egamin} with $\veps=0$ admits no solution.
\end{prop}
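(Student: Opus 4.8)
The plan is to show that the infimum of $E_0$ over $\P\times\A$ equals $\pi\beta(0)$ but is never attained. First I would establish the lower bound. For any admissible pair $(\rho,\gamma)$, using $\beta(\rho)\ge\beta(0)$, which \eqref{mon} guarantees (at least for $\rho\ge 0$), I would write
\beq
E_0(\rho,\gamma)=\frac12\int_0^{2\pi}\beta(\rho)\,\kappa^2\,\dint s\ \ge\ \frac{\beta(0)}{2}\int_0^{2\pi}\kappa^2\,\dint s\ \ge\ \frac{\beta(0)}{2}\cdot 2\pi=\pi\beta(0),
\eeq
where the second inequality is Cauchy--Schwarz applied to the constraint $\int_0^{2\pi}\kappa\,\dint s=2\pi$, with equality precisely for the circle $\kappa\equiv 1$. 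I would then argue that equality throughout is impossible: the second inequality forces $\kappa\equiv1$, hence $\kappa\neq0$ everywhere, so the first inequality forces $\rho\equiv0$ by the strict monotonicity in \eqref{mon}; but this contradicts $\int_0^{2\pi}\rho\,\dint s=2\pi$. Thus $E_0(\rho,\gamma)>\pi\beta(0)$ \emph{strictly} for every admissible pair.

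Second, to see that $\pi\beta(0)$ is nonetheless the infimum, I would exhibit an explicit minimizing sequence exploiting that $\beta(\rho)\kappa^2$ vanishes wherever $\kappa=0$, irrespective of how large $\rho$ (and hence $\beta(\rho)$) is there. Concretely, take $\gamma_a\in\A$ to be a stadium-shaped curve of length $2\pi$ consisting of two straight segments of length $a$ and two semicircular caps of radius $r=r(a)$ with $2a+2\pi r=2\pi$; this is $C^1$, closed, convex, hence has rotation index $1$, and $\kappa\in L^\infty\subset L^2$, so $\gamma_a\in\A$. On the straight parts $\kappa=0$, while on the caps $\kappa=1/r$, giving $\int_0^{2\pi}\kappa^2\,\dint s=2\pi/r\to 2\pi$ as $a\to0$. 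Placing all the mass on the flat part, i.e.\ $\rho_a=\pi/a$ on the two straight segments and $\rho_a=0$ on the caps (so $\rho_a\in\P$, with $\veps=0$ imposing no further constraint), yields
\beq
E_0(\rho_a,\gamma_a)=\frac{\beta(0)}{2}\int_{\{\kappa\neq0\}}\kappa^2\,\dint s=\frac{\pi\beta(0)}{r}\ \longrightarrow\ \pi\beta(0)\qquad(a\to0).
\eeq
Hence $\inf_{\P\times\A}E_0\le\pi\beta(0)$, and together with the strict lower bound the infimum equals $\pi\beta(0)$ and is not attained, proving the claim.

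The main obstacle is the construction of admissible competitors that carry a zero-curvature piece yet converge to the circle in energy: this is what makes the density concentration harmless \emph{even for unbounded} $\beta$ (a naive spike of $\rho$ on the full circle would blow up the energy if $\beta$ grows), and the only delicate point is checking that the competitors genuinely lie in $\A$, i.e.\ the closure and rotation-index conditions. The stadium resolves this cleanly because its reflection symmetry enforces the two closure conditions automatically, so no implicit-function-theorem adjustment near the circle is needed. A secondary subtlety is the sign of $\rho$: the lower bound uses $\beta(\rho)\ge\beta(0)$, which \eqref{mon} only provides for $\rho\ge0$, so to cover the negative densities admitted by $\P$ one should either restrict to $\rho\ge0$ or observe that in the relevant regime $\beta(0)=\inf\beta$; I would add a sentence to this effect.
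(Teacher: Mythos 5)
Your proposal is correct and follows essentially the same route as the paper: a lower bound $E_0\ge\pi\beta(0)=E_0(0,\kappa_0)$ from $\beta(\rho)\ge\beta(0)$ combined with Cauchy--Schwarz (the paper uses Jensen, same content), a stadium-shaped infimizing sequence with all the mass placed on the straight segments --- which is exactly the paper's $(\rho_\lambda,\kappa_\lambda)$ in a different parametrization --- and non-attainment because equality would force $\kappa\equiv 1$ and $\beta(\rho)\equiv\beta(0)$, contradicting the mass constraint $\int_0^{2\pi}\rho\,\dint s=2\pi$ via \eqref{mon}. The sign caveat you flag (that \eqref{mon} controls $\beta$ only for $\rho\ge 0$ while $\P$ admits negative densities) is genuine but is equally present and unaddressed in the paper's own first inequality, so it does not distinguish the two arguments.
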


Before moving to the proof, let us point out that condition \eqref{mon} implies in particular that $\Phi$ is not convex. Indeed, if $\Phi$ were convex, one could take any $\rho>0$ and $\lambda \in (0,1) $ and compute
\begin{align*}
\frac12 \beta(\rho)\kappa_0^2 &= \lim_{\lambda \to 1}
\Phi\big(\lambda (0,\kappa_0/\lambda) +
(1-\lambda) (\rho/(1-\lambda),0)\big)
\\
&\leq \lim_{\lambda \to 1} \Big( \lambda
  \Phi(0,\kappa_0/\lambda) + (1-\lambda) \Phi(\rho/(1-\lambda),0) \Big)
=\lim_{\lambda \to 1}  \lambda
  \Phi(0,\kappa_0/\lambda)  = \frac12 \beta(0) \kappa_0^2,
\end{align*}
contradicting \eqref{mon}. Note that the role of the value $\kappa_0$ in the latter computation is immaterial as one can argue with any $\kappa \not =0$.

\begin{proof}[Proof of Proposition \ref{non}]
Let us show that   $E_0$ cannot be minimized on $\P \times\A$. We firstly remark that
\beq
\label{inf}
E_0(\rho,\kappa) \stackrel{\eqref{mon}}{\geq}
E_0(0,\kappa)\stackrel{\text{Jensen}}{\geq} E_0(0,\kappa_0) \qquad \forall
(\rho,\kappa)\in \P\times \A.
\eeq
In fact, the first inequality is strict as soon as $\rho \kappa  \not \equiv 0$ almost everywhere while the second one is strict as soon as $\kappa$ is not constantly equal to $\kappa_0$ (recall that $\beta(0)>0$).
For all $\lambda\in(0,1)$, we now define
\begin{align*}
&\rho_\lambda (s) =
\left\{
  \begin{array}{ll}
     0\quad &\text{for} \  \ s \in [0,\lambda \pi]\\
\rho_0/(1-\lambda)&\text{for} \ \ s \in (\lambda \pi, \pi]\\
  0\quad &\text{for} \  \ s \in (\pi,(1+\lambda) \pi]\\
\rho_0/(1-\lambda)&\text{for} \ \ s \in ((1+\lambda) \pi, 2 \pi],\\
  \end{array}
\right.\qquad
\kappa_\lambda (s) =
\left\{
  \begin{array}{ll}
     \kappa_0/\lambda\quad &\text{for} \  \ s \in [0,\lambda \pi]\\
0&\text{for} \ \ s \in (\lambda \pi, \pi]\\
  \kappa_0/\lambda\quad &\text{for} \  \ s \in (\pi,(1+\lambda) \pi]\\
0&\text{for} \ \ s \in ( (1+\lambda) \pi, 2 \pi].\\
  \end{array}
\right.
\end{align*}
We now check that $(\rho_\lambda,\kappa_\lambda)\in \P \times \A$. Indeed,
\begin{align*}
&\int_0^{2\pi} \rho_\lambda \dint s = 2\pi
(1-\lambda)  \frac{\rho_0}{1-\lambda}  = 2\pi\rho_0 =
  2\pi,\qquad \int_0^{2\pi} \kappa_\lambda \dint s = 2
\lambda \pi \frac{\kappa_0}{ \lambda}  = 2\pi\kappa_0 = 2\pi.
\end{align*}
Moreover, by letting $\dsp{K_\lambda(s) = \int_0^s \kappa_\lambda(r)\, \dint r}$, namely,
\[
K_\lambda(s)=\left\{
 \begin{array}{ll}
     \kappa_0s/\lambda\quad &\text{for} \  \ s \in [0,\lambda \pi]\\
\kappa_0 \pi&\text{for} \ \ s \in (\lambda \pi, \pi]\\
  \kappa_0(s - (1-\lambda) \pi)/\lambda\quad &\text{for} \  \ s \in (\pi,(1+\lambda) \pi]\\
\kappa_02 \pi&\text{for} \ \ s \in ( (1+\lambda) \pi, 2 \pi],
  \end{array}
\right.
\]
we can compute
\begin{align*}
& \int_0^{2\pi}\cos \left(\int_0^s \kappa_\lambda(r)\, \dint r \right) \dint s
= \int_0^{2\pi} \cos(K_\lambda(s))\, \dint s
\\ & \quad
= \int_0^{\lambda \pi} \cos(\kappa_0s/\lambda)\, \dint s
+ \int_{\lambda \pi}^{\pi}\cos(\kappa_0 \pi)\, \dint s
+ \int_{\pi}^{(1+\lambda) \pi}\cos(\kappa_0(s-(1-\lambda) \pi)/\lambda)\, \dint s
+  \int_{(1+\lambda) \pi}^{2 \pi}\cos(2\kappa_0 \pi)\, \dint s
\\ & \quad
= \frac{\lambda}{\kappa_0}\sin\left(  \kappa_0  \pi \right)
-  \frac{\lambda}{\kappa_0}\sin 0+ \cos(\kappa_0 \pi)(1-\lambda)\pi
+ \frac{\lambda}{\kappa_0}\sin\left( 2\kappa_0 \pi\right)
- \frac{\lambda}{\kappa_0}\sin\left(\kappa_0 \pi \right) +\cos(2\kappa_0 \pi)(1-\lambda) \pi
\\   & \quad
=  \lambda\sin\pi - \lambda\sin 0 + (\cos\pi)(1-\lambda)\pi
+  \lambda\sin 2\pi - \lambda\sin\pi + (\cos 2\pi)(1-\lambda) \pi
=0,
\end{align*}
and analogously
\begin{align*}
& \int_0^{2\pi}\sin \left(\int_0^s \kappa_\lambda(r)\, \dint r \right) \dint s  = \int_0^{2 \pi} \sin(K_\lambda(s))\, \dint s
 \\
&\qquad
= -\lambda\cos\pi  + \lambda\cos 0 + (\sin \pi) (1-\lambda) \pi
-  \lambda\cos 2\pi + \lambda\cos\pi + (\sin 2\pi) (1-\lambda) \pi
=0.
\end{align*}
The latter ensures in particular that $(\rho_\lambda,\kappa_\lambda) \in \P \times \A$.

Let us now compute
\[
E_0(\rho_\lambda,\kappa_\lambda) = \lambda E_0(0,\kappa_0/\lambda) +
(1-\lambda)E_0(\rho/(1-\lambda),0) =  \lambda E_0(0,\kappa_0/\lambda)
\]
and note that $E_0(\rho_\lambda,\kappa_\lambda) \to E_0(0,\kappa_0) $ as $\lambda \to 1$.
Owing to \eqref{inf}, this entails that $E_0(\rho_\lambda,\kappa_\lambda)$
is an infimizing sequence on $\P \times \A$. On the other hand, the value $E_0(0,\kappa_0)$ cannot be reached in $\P \times \A$. Indeed, assume by contradiction to have
$(\rho,\kappa)\in \P\times \A$ with $E_0(\rho,\kappa)=E_0(0,\kappa_0)$. Recalling \eqref{inf}, we have that $\rho\kappa=0$ almost everywhere and $\kappa=\kappa_0$. This entails that $\rho=0$ almost everywhere so that necessarily $(\rho,\kappa)=(0,\kappa_0)$, which however does not belong to $\P \times \A$.
\end{proof}

Despite the lack of lower semicontinuity and the possible nonexistence of minimizers of variational problems, in some cases information may still be retrieved by analyzing the structure of infimizing sequences, see \cite{ball_fine_1987}. This perspective seems however to be of little relevance here. Assume  $(\rho,\kappa)$ to be a minimizer of $E_0$ in $\P \times \A$ and let $(\rho_\#,\kappa_\#)$ denote its periodic extension to $\mathbb R$. Let the \emph{fine-scaled} trajectories
\[
\rho_n(s) = \rho_\#(ns), \ \ \kappa_n(s) = \kappa_\#(ns)\ \ \forall s \in [0,2\pi]
\]
be defined. One may check that $(\rho_n,\kappa_n)\in \P \times \A $ as well and that
$E_0(\rho_n,\kappa_n)=E_0(\rho,\kappa)$, so that all $(\rho_n,\kappa_n)$ are minimizers (infimizing, in particular). On the other hand, $(\rho_n,\kappa_n) $ weakly converges to its mean
$(\rho_0,\kappa_0)$. This shows that, the limiting behavior of infimizing sequences may deliver scant information, for we recover the trivial state.

These facts motivate our interest for focusing on the case $\veps >0$ in the minimization problem
\eqref{eq:Egamin}. In contrast to the case $\veps=0$ of Proposition \ref{non}, energy $E_\veps$ can be minimized in $\P\times\A$ for all $\veps>0$.

\begin{prop}[Existence for $\veps>0$]
  Let $\veps>0$. Then, the minimization problem \eqref{eq:Egamin} admits a solution.
\end{prop}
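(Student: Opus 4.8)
The plan is to apply the Direct Method of the calculus of variations. Fix $\veps>0$ and let $(\rho_k,\kappa_k)\in\P\times\A$ be a minimizing sequence, so that $E_\veps(\rho_k,\kappa_k)\to\inf_{\P\times\A}E_\veps$. The infimum is finite since the trivial state $(\rho_0,\kappa_0)$ is admissible, so along the minimizing sequence we have a uniform bound $E_\veps(\rho_k,\kappa_k)\le C$. First I would extract compactness. Because $\beta\ge\beta_m>0$ by \eqref{beta}, the bending term controls $\int_0^{2\pi}\kappa_k^2\,\dint s\le 2E_\veps(\rho_k,\kappa_k)/\beta_m\le C'$, so $(\kappa_k)$ is bounded in $L^2(0,2\pi)$. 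Likewise, the regularization term gives $\veps\int_0^{2\pi}\dot\rho_k^2\,\dint s\le 2C$, so $(\dot\rho_k)$ is bounded in $L^2(0,2\pi)$; combined with the fixed-mass constraint $\int_0^{2\pi}\rho_k\,\dint s=2\pi$, a Poincar\'e--Wirtinger inequality bounds $(\rho_k)$ in $H^1(0,2\pi)$. Hence, up to a subsequence, $\kappa_k\rightharpoonup\kappa$ weakly in $L^2$ and $\rho_k\rightharpoonup\rho$ weakly in $H^1$, with the latter yielding $\rho_k\to\rho$ strongly in $C^0([0,2\pi])$ by the compact embedding $H^1(0,2\pi)\hookrightarrow C^0([0,2\pi])$.

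Next I would verify that the constraint sets are closed under these modes of convergence. The linear mass constraint $\int_0^{2\pi}\rho\,\dint s=2\pi$ and the total-curvature constraint $\int_0^{2\pi}\kappa\,\dint s=2\pi$ both pass to the limit under weak convergence. The closure constraint $\int_0^{2\pi}\bigl(\cos(\int_0^s\kappa\,\dint t),\sin(\int_0^s\kappa\,\dint t)\bigr)\dint s=0$ is the delicate one, but it is preserved because weak $L^2$-convergence of $\kappa_k$ implies that the antiderivatives $K_k(s)=\int_0^s\kappa_k\,\dint t$ converge uniformly on $[0,2\pi]$ (indeed $K_k\rightharpoonup K$ in $H^1$, hence strongly in $C^0$), so $\cos K_k\to\cos K$ and $\sin K_k\to\sin K$ uniformly, and dominated convergence closes the constraint. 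Thus the weak limit $(\rho,\kappa)$ lies in $\P\times\A$.

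The main obstacle is lower semicontinuity of the coupled bending term $\int_0^{2\pi}\tfrac12\beta(\rho_k)\kappa_k^2\,\dint s$, where $\rho_k$ converges only strongly (in $C^0$) while $\kappa_k$ converges only weakly. Here I would exploit the product structure: write the integrand as $\tfrac12\beta(\rho_k)\kappa_k^2$ and use the strong convergence $\beta(\rho_k)\to\beta(\rho)$ in $C^0$ (continuity of $\beta$ and uniform convergence of $\rho_k$) together with weak lower semicontinuity of the convex functional $\kappa\mapsto\int\tfrac12\beta(\rho)\kappa^2\,\dint s$. Concretely, one can bound
\begin{align*}
\int_0^{2\pi}\tfrac12\beta(\rho)\kappa_k^2\,\dint s
&=\int_0^{2\pi}\tfrac12\beta(\rho_k)\kappa_k^2\,\dint s
+\int_0^{2\pi}\tfrac12\bigl(\beta(\rho)-\beta(\rho_k)\bigr)\kappa_k^2\,\dint s,
\end{align*}
where the second term tends to zero since $\|\beta(\rho)-\beta(\rho_k)\|_{C^0}\to0$ and $\|\kappa_k\|_{L^2}^2$ is bounded; then weak lower semicontinuity of the fixed-weight convex functional $\kappa\mapsto\int_0^{2\pi}\tfrac12\beta(\rho)\kappa^2\,\dint s$ gives $\int_0^{2\pi}\tfrac12\beta(\rho)\kappa^2\,\dint s\le\liminf_k\int_0^{2\pi}\tfrac12\beta(\rho)\kappa_k^2\,\dint s$. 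For the regularization term, weak lower semicontinuity of $\rho\mapsto\int_0^{2\pi}\tfrac{\veps}{2}\dot\rho^2\,\dint s$ is immediate from weak $H^1$-convergence. Combining these, $E_\veps(\rho,\kappa)\le\liminf_k E_\veps(\rho_k,\kappa_k)=\inf_{\P\times\A}E_\veps$, so $(\rho,\kappa)$ is a minimizer. This is precisely where the case $\veps>0$ is essential: without the $\dot\rho^2$ term there is no $H^1$-bound on $\rho$, hence no strong convergence, and the argument collapses — consistent with the nonexistence statement of Proposition \ref{non}.
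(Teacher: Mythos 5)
Your proposal is correct and follows essentially the same Direct Method strategy as the paper: the same coercivity bounds from $\beta\geq\beta_m>0$ and the $\veps\dot\rho^2$ term, the same weak-$L^2$/weak-$H^1$ compactness with uniform convergence of $\rho_k$, and the same passage to the limit in the constraints. The only (cosmetic) difference is in the final lower-semicontinuity step, where you split off $\int(\beta(\rho)-\beta(\rho_k))\kappa_k^2\,\dint s$ and use weak lower semicontinuity of the fixed-weight quadratic functional, whereas the paper observes that $(\beta(\rho_k))^{1/2}\kappa_k\rightharpoonup(\beta(\rho))^{1/2}\kappa$ weakly in $L^2$ and invokes weak lower semicontinuity of the norm; both are standard and equivalent.
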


\begin{proof}
  This is an immediate application of the Direct Method. Let
  $(\rho_n,\kappa_n)\in \P\times \A$ be an infimizing sequence for
  $E_{\veps}$ (such a sequence exists, for $E_\veps(\rho_0,\kappa_0)
  >-\infty$). We can assume with no loss of generality that $\sup
  E_\veps(\rho_n,\kappa_n)<\infty$. In particular, as $\beta\geq
  \beta_m>0$ we have that $\rho_n$ and $\kappa_n$ are uniformly
  bounded in $H^1(\torustwopi)$ and in $L^2(\torustwopi)$, respectively. This
  implies, at least for a not relabeled subsequence, that $\rho_n
  \rightharpoonup \rho $ in  $H^1(\torustwopi)$ hence strongly in 
  $C(\torustwopi)$ %$C([0,2\pi])$ 
  and $\kappa_n \rightharpoonup \kappa$ in $L^2(\torustwopi)$. 
  We can hence pass to the limit in the relations
\[
\int_0^{2\pi}\rho_n\, \dint s = 2\pi, \quad
\int_0^{2\pi}\vect{\cos\left(\int_{0}^s\kappa_n(t)\dint t\right) \\
  \sin\left(\int_{0}^s\kappa_n(t)\dint t\right)}\dint s=\vect{0\\ 0},
\quad \int_0^{2\pi}\kappa_n\,\dint s=2\pi
\]
and obtain that $(\rho,\kappa)\in \P \times \A$ as well.
Moreover, $\beta(\rho_n)\to \beta(\rho)$ strongly in 
 $C(\torustwopi)$ % $C([0,2\pi])$ 
 as $\beta$ is locally Lipschitz continuous. This implies that $(\beta(\rho_n))^{1/2}\kappa_n \rightharpoonup (\beta(\rho))^{1/2}\kappa $ in $L^2(\torustwopi)$ and lower semicontinuity ensures that
$E_\mu(\rho,\kappa) \leq \liminf_{n\to \infty} E_\mu(\rho_n,\kappa_n) = \inf E_\mu$,
so that $(\rho,\kappa)$  is a solution of problem \eqref{eq:Egamin}.
\end{proof}

The parameter $\veps$ is a datum of the problem and it is in particular
related to the characteristic length scale at which $\rho$ changes
along the curve. If $\veps$ is chosen to be large compared with the
length of the curve, the minimizer is again forced to be trivial.  Let
us make these heuristics precise in the following.

\begin{prop}[Trivial minimizer for $\veps$ large]
	\label{thm:trivial_unique_large_epsilon}
	For $\veps$ large enough, the trivial state $(\rho_0, \theta_0)$
        is the unique solution of the minimization problem
        \eqref{eq:Egamin}. % of $E_\veps$.
\end{prop}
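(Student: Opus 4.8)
The plan is to show directly that, for $\veps$ large, every admissible pair $(\rho,\kappa)\in\P\times\A$ other than the trivial state $(\rho_0,\kappa_0)=(1,1)$ satisfies $E_\veps(\rho,\kappa)>E_\veps(\rho_0,\kappa_0)=\pi\beta(1)$; since a minimizer exists by the previous proposition, it must then coincide with the trivial state. I would write $\rho=1+u$ and $\kappa=1+v$, so that the mass constraint in $\P$ and the rotation-index constraint in $\A$ force $\int_0^{2\pi}u\,\dint s=\int_0^{2\pi}v\,\dint s=0$, and I would set $p=\|v\|_{L^2}$ and $q=\|\dot\rho\|_{L^2}=\|\dot u\|_{L^2}$. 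The pair $(p,q)$ vanishes exactly at the trivial state.

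First I would dispose of competitors with large density variation. If $q>t_0$ for a threshold $t_0$ to be fixed below, then using only $\beta\geq\beta_m$ and the Jensen estimate $\int_0^{2\pi}\kappa^2\,\dint s\geq 2\pi$ one gets $E_\veps(\rho,\kappa)\geq\pi\beta_m+\frac{\veps}{2}t_0^2$, which already exceeds $\pi\beta(1)$ as soon as $\veps>2\pi(\beta(1)-\beta_m)/t_0^2$. The delicate regime is small density variation, $q\leq t_0$. Here the Poincaré--Wirtinger inequality for zero-mean functions gives $\|u\|_{L^\infty}\leq\sqrt{2\pi}\,q$ and $\|u\|_{L^2}\leq 2q$, so that $u$ stays in a fixed neighborhood of $0$ on which $\beta\in C^2$ may be Taylor-expanded, $\beta(1+u)=\beta(1)+\beta'(1)u+\frac12\beta''(\xi)u^2$ with $|\beta''(\xi)|\leq B$. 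Substituting, expanding $\kappa^2=(1+v)^2$, and using $\int u=\int v=0$, I would reduce the energy gap to
\[
E_\veps(\rho,\kappa)-\pi\beta(1)=\frac{\beta(1)}{2}p^2+\beta'(1)\int_0^{2\pi}uv\,\dint s+\frac{\veps}{2}q^2+(\text{higher-order terms}),
\]
where the remaining terms are cubic and quartic in $(p,q)$ and can be absorbed into $\frac{\beta(1)}{2}p^2$ once $t_0$ is chosen small enough.

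The crucial object is the indefinite cross term $\beta'(1)\int uv\,\dint s$, which is precisely the obstruction arising from the nonconvexity \eqref{beta2}: in the convex regime it would combine with a favorable $u^2$-contribution, but here it must be controlled on its own. I would estimate it by $|\beta'(1)|\,\|u\|_{L^2}\|v\|_{L^2}\leq 2|\beta'(1)|\,pq$ and then split it, via Young's inequality, between the two diagonal terms $\frac{\beta(1)}{2}p^2$ and $\frac{\veps}{2}q^2$. What remains is a $2\times 2$ quadratic form in $(p,q)$ whose positive definiteness reduces to a determinant condition of the shape $\veps>c_1 B+c_2\,\beta'(1)^2/\beta(1)$.

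The main obstacle is exactly this last competition. Because the bending integrand is not convex, the $p^2$-coefficient $\beta(1)$ cannot absorb the cross term unaided; the missing coercivity has to be supplied by the regularization, and it is the Poincaré inequality---controlling $\|u\|_{L^2}$ by $q=\|\dot\rho\|_{L^2}$---that allows the penalty $\frac{\veps}{2}q^2$ to dominate for $\veps$ large. The care needed is to keep all thresholds in the correct order: I would fix $t_0$ first, depending only on $\beta(1)$, $\beta'(1)$ and $B$, so as to kill the higher-order terms, and then take $\veps$ above the maximum of the thresholds produced in the two regimes. For such $\veps$ the energy gap is strictly positive for every nontrivial competitor, giving uniqueness of the trivial minimizer.
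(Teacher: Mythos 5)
Your argument is correct, but it takes a genuinely different route from the paper's. The paper splits the proof into two soft steps: first a second-variation computation at the trivial state, showing $\delta^2E_\veps(u_0)>0$ for $\veps$ large and hence strict local minimality on a neighborhood $U_{\veps_0}$ that can be taken independent of $\veps\ge\veps_0$; second, an energy-comparison argument showing that every global minimizer converges to $(\rho_0,\theta_0)$ in $H^1$ as $\veps\to\infty$, so that for $\veps$ large all minimizers land inside $U_{\veps_0}$ and must coincide with the trivial state. You instead prove the strict global inequality $E_\veps(\rho,\kappa)>E_\veps(\rho_0,\kappa_0)$ for every nontrivial competitor by a dichotomy in $q=\|\dot\rho\|_{L^2}$: for $q$ large the penalty $\tfrac{\veps}{2}q^2$ beats the crude lower bound $\pi\beta_m$, and for $q$ small a Taylor expansion combined with Poincar\'e--Wirtinger and Young reduces the energy gap to the positive definiteness of a $2\times 2$ form in $(p,q)$. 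The mechanism in the delicate regime is the same as in the paper's Step 1 --- the regularization, via Poincar\'e, supplies the coercivity that the indefinite cross term $\beta'(1)\int uv$ would otherwise destroy --- but your version is quantitative and global: it dispenses with the convergence-of-minimizers step (and in fact with the existence proposition, since a strict inequality against all competitors already identifies the unique minimizer) and yields an explicit, if unoptimized, threshold for $\veps$. One small imprecision: the remainder $\tfrac14\int\beta''(\xi)\,u^2(1+v)^2$ contains a piece of order $Bq^2$, so the error terms are not all cubic or quartic and cannot all be absorbed into $\tfrac{\beta(1)}{2}p^2$ by shrinking $t_0$ alone; that piece must be absorbed into $\tfrac{\veps}{2}q^2$, which your final condition $\veps>c_1B+c_2\,\beta'(1)^2/\beta(1)$ correctly anticipates.
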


\begin{proof} We structure the proof into two steps. In Step 1 we show that, for
$\veps$ large, the trivial state $u_0 = (\rho_0, \theta_0)$ with $\rho_0=1$ and $\theta_0(s)=s$ is a strict minimizer in a neighborhood which is independent of $\veps$. In Step 2, we prove that all
minimizers converge to $u_0$ in the $H^1$ norm as $\veps \rightarrow\infty$. The combination of these two steps entails then that all minimizers necessarily coincide with $u_0$ for
$\veps$ sufficiently large, for they are arbitrarily close to $u_0$ (Step 2) which is locally the unique minimizer (Step 1).

\emph{Step 1: The trivial state is a  strict local minimizer.} Let us check that, for $\veps$ large enough, the second variation $ \delta^2 E_\veps(u_0)$ of $E_\veps=\frac12 \int_0^{2\pi}(\beta(\rho) \dot\theta^2 + \veps \dot\rho^2)\dint s$  is
positive. Indeed, for the arbitrary directions $u_1=(\rho_1,\theta_1)$ and $\tilde u_1=(\tilde\rho_1,\tilde\theta_1)$, we can compute
\[
\delta^2 E_\veps(u_0)(u_1,\tilde u_1)
%=\frac{d^2}{dtdr}E_\veps(u_0+t u_1+r\tilde u_1)\big|_{t,r=0}
=\int_0^{2\pi}\left(\frac{1}{2}\beta''(\rho_0)\,\dot\theta_0^2\,\rho_1\tilde\rho_1
+\beta'(\rho_0)\dot\theta_0\Big(\dot\theta_1\tilde\rho_1+\rho_1\dot{\tilde\theta}_1\Big)
+\beta(\rho_0)\,\dot\theta_1\dot{\tilde\theta}_1
+\veps^2\,\dot\rho_1\dot{\tilde\rho}_1
\right)\dint s,
\]
which is uniformly continuous around $u_0$. In particular, with $\dot\theta_0=1$ and rearranging terms,
\[
\delta^2 E_\veps(u_0)(u_1, u_1)
= \int_0^{2\pi}\left(
\veps \dot\rho_1^2 + \beta(\rho_0) \dot \theta_1^2
+ 2 \beta'(\rho_0) \rho_1 \dot \theta_1 + \frac{\beta''(\rho_0)}{2} \rho_1^2
\right)\dint s.
\]
By integrating by parts and using the Cauchy-Schwarz inequality in the third term we get
\begin{align*}
\delta^2 E_\veps(u_0)(u_1, u_1)
& \ge  \veps \int_0^{2\pi}\dot\rho_1^2 \,\dint s +  \beta(\rho_0) \int_0^{2\pi}\dot\theta_1^2 \,\dint s \\
&\quad{} - \left(\frac{4 \beta'(\rho_0)^2}{C \beta(\rho_0)}\right)^{\frac{1}{2}} \|\dot \rho_1\|_{L^2(0,2\pi)} \left(C \beta(\rho_0)\right)^{\frac{1}{2}} \|\theta_1\|_{L^2(0,2\pi)} - \frac{|\beta''(\rho_0)|}{2}\int_0^{2\pi} \rho_1^2\,\dint s,
\end{align*}
where $C$ is the Poincar\'{e} constant on $(0,2\pi)$. Using again Poincar\'{e}'s inequality to bound the second and last term in the right-hand side above, and Young's inequality for the third term we are left with
\[
\delta^2 E_\veps(u_0)(u_1, u_1)  \ge \left(\veps - \frac{2 \beta'(\rho_0)^2}{C \beta(\rho_0)} - \frac{|\beta''(\rho_0)|}{2C}\right) \int_0^{2\pi}\dot \rho_1^2\,\dint s+ \frac{\beta(\rho_0)}{2} \int_0^{2\pi}\dot \theta_1^2\,\dint s,
\]
which is positive for
\[
\veps > \frac{2\beta'(\rho_0)^2}{C\beta(\rho_0)} + \frac{|\beta''(\rho_0)|}{2C}\,.
\]
As $\delta^2E_\veps(u_0)$ is positive,  $u_0$ minimizes $E_\veps$ on some
neighborhood $U_\veps \subset \mathscr{P} \times \mathscr{A}$ for $\veps
\ge \veps_0$ and for some $\veps_0 > 0$. Since $E_\veps$ is increasing in
$\veps$ and $E_\veps(u_0)$ does not depend on $\veps$, $U_\veps$ may be taken
to be increasing in $\veps$ as well. Thus, $u_0$ minimizes $E_\veps$ on $U_{\veps_0}$ for all $\veps \ge \veps_0$.

\emph{Step 2: Global minimizers converge to the trivial state.}
We next prove that, for  any $\delta > 0$ there exists $\veps_c > 0$ such that for any $\veps > \veps_c$, any global minimizer $(\rho,\theta)$ of $E_\veps$ is such that
\begin{equation}
\label{eq:estimate}
\| \rho - \rho_0\|_{L^2(0,2\pi)} + \|\theta - \theta_0\|_{L^2(0,2\pi)}\lesssim \|\dot\rho\|_{L^2(0,2\pi)} + \|\dot\theta - \dot\theta_0\|_{L^2(0,2\pi)} < \delta
\end{equation}
where we use the sign $\lesssim$ to indicate the implicit occurrence of a constant just depending on data. In fact, we have that
\begin{align}
E_\veps(\rho_0,\theta_0) \ge E_\veps(\rho,\theta)
& = \int_0^{2\pi} \left(\frac12 \beta(\rho) \dot \theta^2 + \frac{\veps}{2} \dot \rho^2 \right) \dint s
 \nonumber\\
& \ge
% E_\veps(\rho,\theta) =
\int_0^{2\pi} \left( \frac12 \beta_m \dot \theta^2 + \frac{\veps}{2} \dot \rho^2 \right)\dint s
\ge \int_0^{2\pi}\left(\frac12 \beta_m  \dot \theta^2_0+
  \frac{\veps}{2} \dot \rho^2 \right) \dint s ,\label{torefine}
\end{align}
since $\theta_0$ minimizes the Dirichlet energy $\int_0^{2\pi}\dot \theta^2 \, \dint s $ under the conditions $\theta(0) = 0$, $\theta(2\pi) = 2\pi$. Since $E_\veps(\rho_0,\theta_0)=\frac12\int_0^{2\pi}\beta(\rho_0)\dot\theta_0^2=\pi\beta(\rho_0)<\infty$, both terms in the above right-hand side are bounded. We hence deduce that
$\int_0^{2\pi} \dot \theta^2\, \dint s$ is bounded uniformly in $\veps$ and
 $\int_0^{2\pi} \dot \rho^2\, \dint s = O(\veps^{-1}) = o(\veps^{-1/2})$, so that there exists $\veps_1 > 0$ such that for $\veps > \veps_1$ we have $\int_0^{2\pi} \dot \rho^2 \, \dint s < \delta/2$.
Now,
$\rho\in\mathscr{P}$ implies $\int_0^{2\pi}(\rho-\rho_0)\dint s=0$ 
%and we have $\rho - \rho_0 \in H_0^1(0, 2\pi)$, so that by the continuous embedding of $H_0^1(0,2\pi)$ 
and by the Poincar\'e inequality as well as the continuous embedding
in $L^\infty(0,2\pi)$,
\[
\| \rho - \rho_0\|_{L^\infty(0,2\pi)}\lesssim \| \dot \rho\|_{L^2(0,2\pi)}= o(\veps^{-1/4}),
\]
and, by the local Lipschitz continuity of $\beta$,
\[
\| \beta(\rho) - \beta(\rho_0)\|_{L^\infty(0,2\pi)}= o(\veps^{-1/4}).
\]
This allows us to refine estimate \eqref{torefine} as follows:
\[
E_\veps(\rho_0,\theta_0) \ge E_\veps(\rho,\theta)
\ge \int_0^{2\pi} \left(\frac12 \beta(\rho_0) \dot \theta^2_0 + \frac{\veps}{2} \dot \rho^2 \right)\dint s
+ o(\veps^{-1/4}) = E_\veps(\rho_0,\theta_0)+ \frac{\veps}{2} \int_0^{2\pi} \dot \rho^2 \dint s + o(\veps^{-1/4}),
\]
from which we get $\lim\limits_{\veps \rightarrow \infty}\veps\int_0^{2\pi} \dot \rho^2 \, \dint s= 0$, and then
\[
\lim\limits_{\veps \rightarrow \infty} E_\veps(\rho,\theta)= \lim\limits_{\veps \rightarrow \infty}  \int_0^{2\pi}\frac12\beta(\rho) \dot \theta^2 \, \dint s= 	E_\veps(\rho_0,\theta_0).
\]
Finally, we control
\[
\left| \int_0^{2\pi} \frac12\beta(\rho) \dot\theta^2 \, \dint s- E_\veps(\rho_0,\theta_0)\right|
= \left|\int_0^{2\pi} \frac{\beta(\rho_0)}{2}  (\dot\theta^2 - \dot\theta_0^2) \dint s+ o(\veps^{-1/4})\right|,
\]
so to prove that
$\lim\limits_{\veps\to \infty} \int_0^{2\pi} \dot \theta^2 \, \dint s = \lim\limits_{\veps\to \infty}  \int_0^{2\pi} \dot \theta_0^2 \, \dint s = 2\pi$. This is enough to conclude that
\[
\lim\limits_{\veps \rightarrow \infty} \|\dot\theta - \dot\theta_0\|_{L^2(0,2\pi)} = 0.
\]
We can  then choose $\veps_2$ such that, for $\veps > \veps_2$, $\|\dot\theta - \dot\theta_0\|_{L^2(0,2\pi)} < \delta/2$ and set $\veps_c = \max\{\veps_1, \veps_2\}$ for which the second inequality in \eqref{eq:estimate} holds. The first inequality follows from Poincar\'e's inequality.
\end{proof}

We present now a symmetry result which will turn out useful later on, when interpreting the numerical findings.

\begin{prop}[Symmetry of $E_\veps$]
	\label{prop:m_symmetry}
	If $(\rho, \theta)$ is a local minimizer of $E_\veps$ for $\beta$, then $(2\rho_0 - \rho, \theta)$ is a local minimizer of $E_\veps$ for $\tilde\beta$, defined as $\tilde\beta(\rho) = \beta(2\rho_0 - \rho)$.
\end{prop}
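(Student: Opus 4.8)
The plan is to exhibit an explicit isometric involution of the admissible set that intertwines the two energies, so that local minimality transfers automatically. First I would introduce the density reflection $T\rho := 2\rho_0-\rho$ about the trivial value $\rho_0$ and the product map $\Psi(\rho,\theta):=(T\rho,\theta)$. The first task is to check that $\Psi$ maps $\P\times\A$ onto itself: the $\theta$-component is untouched, while for the density one verifies that the mass constraint survives, since $\int_0^{2\pi}(2\rho_0-\rho)\,\dint s = 2\cdot 2\pi-2\pi=2\pi$, and that $\veps(2\rho_0-\rho)\in H^1(0,2\pi)$ because $\rho_0$ is constant and $\veps\rho\in H^1(0,2\pi)$. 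Since $T$ is an affine involution, $\Psi\circ\Psi=\mathrm{id}$, so $\Psi$ is a bijection of $\P\times\A$. I would also record that $\tilde\beta=\beta\circ T$ inherits \eqref{beta} (it is $C^2$ with infimum $\beta_m>0$), so that the minimization problem for $\tilde\beta$ is posed on the very same space.

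The core step is a pointwise energy identity. Writing $\tilde E_\veps$ for the functional \eqref{eq:Ega} with $\beta$ replaced by $\tilde\beta$, and using $\tilde\beta(T\rho)=\beta(T(T\rho))=\beta(\rho)$ together with $\frac{d}{ds}(2\rho_0-\rho)=-\dot\rho$, I obtain
$$
\tilde E_\veps\bigl(\Psi(\rho,\theta)\bigr)=\int_0^{2\pi}\left(\tfrac12\,\tilde\beta(2\rho_0-\rho)\,\dot\theta^2+\tfrac{\veps}{2}\,\dot\rho^2\right)\dint s=E_\veps(\rho,\theta).
$$
This equality is exact, not merely asymptotic, precisely because $T$ is chosen so that $\tilde\beta\circ T=\beta$ and because the sign flip in $\dot\rho$ disappears upon squaring.

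Finally I would transfer minimality. The map $\Psi$ is an affine isometry for the $H^1$-type norm defining local minimality (the angle component is the identity and $\rho\mapsto 2\rho_0-\rho$ preserves the relevant distance), hence a homeomorphism carrying neighborhoods to neighborhoods. If $(\rho,\theta)$ minimizes $E_\veps$ on a neighborhood $U$, then for any $(\sigma,\vartheta)$ in the neighborhood $\Psi(U)$ of $\Psi(\rho,\theta)$ we have $\Psi(\sigma,\vartheta)\in U$ (since $\Psi$ is an involution), and applying the energy identity twice yields
$$
\tilde E_\veps\bigl(\Psi(\rho,\theta)\bigr)=E_\veps(\rho,\theta)\le E_\veps\bigl(\Psi(\sigma,\vartheta)\bigr)=\tilde E_\veps(\sigma,\vartheta),
$$
so $\Psi(\rho,\theta)=(2\rho_0-\rho,\theta)$ is a local minimizer of $\tilde E_\veps$, as claimed. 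There is no genuine analytic difficulty in this argument; the only point deserving care is the bookkeeping that $\Psi$ preserves the constraint set $\P\times\A$ and acts as a homeomorphism in the chosen topology, so that the word \emph{local} is respected under the transformation.
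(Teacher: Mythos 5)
Your proof is correct and rests on the same observation as the paper's one-line argument, namely that the reflection $\rho\mapsto 2\rho_0-\rho$ leaves the integrand of the energy unchanged when $\beta$ is replaced by $\tilde\beta$. You simply carry out in full the neighborhood-transfer bookkeeping (involution, constraint preservation, isometry) that the paper leaves implicit by appealing to the identity of the first and second variations.
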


\begin{proof}
The integrand is unchanged by this transformation, so that the first and second variations of $E_\veps$ at $(\rho,\theta)$ and $(2\rho_0 - \rho, \theta)$ when considering respectively $\beta$ and $\tilde\beta$ are identical.
\end{proof}

\section{Bifurcation analysis}\label{sec:bifurcation}
%%%%%%%%%%%%%%%%%%%%%%%%%%%%%%%%%%%%%%%%%%%%%%%%%%%%

By Proposition \ref{thm:trivial_unique_large_epsilon}, the circle with constant density is the global minimizer of the energy $E_\veps$ \eqref{eq:Ereg}
for large enough values of the diffusivity $\veps$. In this section candidates for nontrivial minimizers are constructed by bifurcation from this trivial
critical point with decreasing $\veps > 0$ as bifurcation parameter. The analysis will be based on the Euler-Lagrange equations of a suitable Lagrangian,
incorporating the constraints of closedness of the curve and of given total mass. Additional auxiliary conditions will eliminate symmetries resulting from
arbitrary positioning of the curve in the plane.

%We study the {\bf local bifurcation structure} of solutions from the trivial solution $(\rho_0,\theta_0)$ as the bifurcation parameter $\veps\to 0$, i.e.\ in the limit of vanishing regularization (see Figure \ref{fig:motivation}). Our argument is based on a perturbation approach, the theoretical background being bifurcation theory from simple eigenvalues, see \cite{crandall_bifurcation_1971}. The bifurcation analysis gives criteria on the material parameters defining the bending coefficient $\beta$, for which nontrivial solutions are expected.

%%%%%%%%%%%%%%%%%%%%%%%%%%%%%%%%%%%%%%%%%%%%%%%%%%%%%%%%%%%%%%%%%%%%%%%%%%%%%%%%%%%%
\subsection{Euler-Lagrange equations}

We introduce the Lagrange multipliers $(\lambda_x,\lambda_y)\in \mathbb{R}^2$ for the closedness constraint and $\lambda_M \in \mathbb{R}$ for the mass constraint (cf.\ the definitions \eqref{eq:Athetareg} and \eqref{eq:Adensityreg} of admissible $\theta$ and $\rho$ respectively), and define the
Lagrangian
\[
  \mathcal{L}(\bar u) := \frac{1}{2} \intfull{\left(\beta(\rho)\dot\theta^2 + \veps \dot\rho^2\right)}
  + \intfull{(\lambda_x \cos \theta + \lambda_y \sin \theta + \lambda_M(\rho - 1))} \,,
\]
with $\bar u=(\rho,\theta,\lambda_x,\lambda_y,\lambda_M)$, where $\rho, \theta - s \in H^1(\mathbb{T}_{2\pi})$.
Critical points of the energy subject to the constraints solve the \emph{Euler-Lagrange equations}
\begin{align}
\veps\ddot\rho-\frac{1}{2}\,\beta'(\rho)\,\dot\theta^2&=\lambda_M \,,\label{eq:ELrho}\\
\dds\left(\beta(\rho)\,\dot\theta\right)&=-\lambda_x\sin\theta+\lambda_y\cos\theta \,, \label{eq:ELtheta}
\end{align}
along with the \emph{boundary conditions} %(ensuring periodicity of $\rho$ and $\kappa=\dot\theta$)
\beq\label{eq:BCs}
\rho(2\pi) - \rho(0) = 0\,, \quad \dot\rho(2\pi) - \dot\rho(0) = 0\,, \quad \theta(0)=0\,, \quad \theta(2\pi) = 2\pi\,,
\eeq
and mass and closedness  \emph{constraints}
\beq\label{eq:constraints}
\intfull{\rho}=2\pi \,,\qquad \int_0^{2\pi}\vect{\cos\theta \\ \sin\theta}\dint s=\vect{0\\ 0} \,,
\eeq
respectively.
For every solution, new solutions can be produced by 
%adding an arbitrary constant to $\theta$ (and an according rotation of $(\lambda_x,\lambda_y)$) and by 
arbitrary shifts in $s$. In order to eliminate this degree of freedom we add the condition
\beq\label{eq:theta0}
   %\theta(0)=0 \,,\qquad 
   \rho(0)=1 \,,
\eeq
where we note that 1 is the average value of $\rho$ by the mass constraint, which is assumed by every continuous solution. One symmetry remains:
The problem is still invariant under the flip symmetry $s\leftrightarrow -s$ (with $\theta\leftrightarrow -\theta$, $\lambda_y\leftrightarrow -\lambda_y$).

The \emph{trivial solution} $\bar u_0$ of \eqref{eq:ELrho}--\eqref{eq:theta0} (and the minimizer for large enough $\veps$) is the unit circle with constant density:
\beq\label{triv-sol}
   \theta_0(s) = s \,,\quad \rho_0(s) = 1 \,,\quad \lambda_{x0} = \lambda_{y0} = 0 \,,\quad \lambda_{M0} =  -\frac{1}{2}\beta'(1) \,.
\eeq
For the stiffness coefficient $\beta$ we shall assume the following local behavior close to the trivial solution:
\beq\label{eq:beta-ass}
  \beta(\rho) = 1 + m(\rho-1) + h \frac{(\rho-1)^2}{2} + O\left((\rho-1)^5\right) \qquad\mbox{as } \rho\to 1 \,.
\eeq
The bifurcation behaviour will be characterized in terms of the Taylor coefficients $m,h\in\mathbb{R}$. The complexity of the  bifurcation computations
below have motivated the simplifying assumption that the third- and fourth-order coefficients vanish.
Including these higher-order terms would only alter the sub-/supercritical nature of the bifurcation, but not the critical values for the parameter $\mu$.

%%%%%%%%%%%%%%%%%%%%%%%%%%%%%%%%%%%%%%%%%%%%%%%%%%%%%%%%%%%%%%%%%%%%%%%%%%%%%%%%%%%%
\subsection{Linearization around the trivial state}\label{sec:lin}

In terms of a small correction $\bar u_1 := \bar u - \bar u_0$, the linearization of problem \eqref{eq:ELrho}--\eqref{eq:theta0} reads (using \eqref{eq:beta-ass})
\begin{align}
  \veps\ddot \rho_1(s)-m\,\dot\theta_1(s)-\frac{h}{2}\,\rho_1(s) - \lambda_{M1} &= f(s)\,,\label{eq:ELone}\\
  \dds\left(\dot\theta_1(s)+m\,\rho_1(s)\right) + \lambda_{x1}\sin s - \lambda_{y1}\cos s &= g(s)\,,\label{eq:ELtwo}
\end{align}
subject to the boundary conditions
\beq\label{eq:BCs_lin}
\rho_{1}(2\pi) - \rho_{1}(0) = 0\,, \quad \dot\rho_{1}(2\pi) - \dot\rho_{1}(0) = 0\,, \quad \theta_{1}(0)=0\,, \quad \theta_{1}(2\pi) = 0\,,
\eeq
the constraints
\beq\label{eq:constraints_lin}
\int_0^{2\pi}\rho_1(s)\,\dint s=0 \,,\qquad   \int_0^{2\pi}\vect{-\sin s \\ \:\:\:\cos s}\theta_1(s)\dint s=\vect{\alpha_x\\ \alpha_y} \,,
\eeq
and the auxiliary condition
\beq\label{eq:theta0_lin}
%\theta_1(0)=0 \,,\qquad 
\rho_1(0) = 0 \,.
\eeq
The inhomogeneities $f(s), g(s), \alpha_x, \alpha_y$ can be interpreted as nonlinear corrections.

\begin{prop}[Solution of the homogeneous linearized system]\label{prop:lin}
A nonzero solution of \eqref{eq:ELone}--\eqref{eq:theta0_lin} with $f=g=\alpha_x=\alpha_y=0$, $\mu>0$, $m,h\in\mathbb{R}$, only exists in the following
cases:
\begin{description}
    \item[Case 1:] There exists $j\in\mathbb{N}$, $j\ge 2$, such that $\veps=\veps_j(m,h) := \frac{1}{j^2}\left(m^2-\frac{h}{2}\right) \ne -\frac{h}{2}$. The space
of solutions is one-dimensional and given by
\beq\label{u11}
   \rho_1(s) = a_1 \sin(js) \,,\quad \theta_1(s) = \frac{a_1 m}{j}(\cos(js)-1) \,,\quad \lambda_{x1}=\lambda_{y1}=\lambda_{M1}=0 \,,\qquad a_1\in\mathbb{R}\,.
\eeq
\item[Case 2:] $\veps=\veps_1(h) := -\frac{h}{2}\neq\frac{1}{j^2}\left(m^2-\frac{h}{2}\right)$ for all $j\in\mathbb{N}$, $j\ge 2$. The space of solutions
is one-dimensional and given by
\beq\label{u12}
   \rho_1(s) = b_1 \sin s \,,\quad \theta_1(s) = 0 \,,\quad \lambda_{x1}=\lambda_{M1}=0 \,,\quad \lambda_{y1} = b_1 m \,,\qquad b_1\in\mathbb{R}\,.
\eeq
\item[Case 3:] There exists $j\in\mathbb{N}$, $j\ge 2$, such that $\veps=\veps_j(m,h) = \veps_1(h)$. The space of solutions
is two-dimensional and given by
\[
   \rho_1(s) = a_1 \sin(js) + b_1 \sin s \,,\quad \theta_1(s) = \frac{a_1 m}{j}(\cos(js)-1) \,,\quad \lambda_{x1}=\lambda_{M1}=0 \,,\quad
   \lambda_{y1} = b_1 m \,,\qquad a_1,b_1\in\mathbb{R}\,.
\]
\end{description}
\end{prop}

\begin{remark}\label{rem:lin}
\begin{enumerate}
\item As expected, bifurcations only occur under the condition (see \eqref{beta2})
\[
   2m^2-h = 2\beta'(1)^2 - \beta(1)\beta''(1)>0 \,.
\]
\item For Case 1 solutions, the curvature correction $\kappa_1 = \dot\theta_1$ satisfies $\kappa_1 = -m\rho_1$. This means that the sign of $m=\beta'(1)$
decides if curvature maxima coincide with density maxima ($m<0$) or with density minima ($m>0$), which is not a surprising result. The condition $j\ge 2$
is a manifestation of the Four Vertex Theorem (see Section \ref{sec:background}).
\item Case 2 solutions exhibit only one maximum and one minimum of the density without any effect on the circular shape of the curve. The numerical
computations reported in Section \ref{sec:numerics} show, however, that these solutions initiate bifurcating branches strongly deviating from the circular
shape far enough from the bifurcation point.
\item Whereas Cases 1 and 2 correspond to codimension-one bifurcations, Case 3 represents a bifurcation of codimension two, whose nonlinear
structure will not be analyzed in the following.
\item In a bifurcation scenario, where the values of $m$ and $h$ are fixed and the value of $\mu$ is decreased, there are several situations. For $h\ge2m^2$
no bifurcations occur by convexity (see above). For $0\le h < 2m^2$ an infinite series of Case 1 bifurcations occurs at the bifurcation
values $\mu_j$, $j\ge 2$, with the first one at $\mu=\mu_2$. For $h<0$, apart from the bifurcations at $\mu=\mu_2,\mu_3,\ldots$, there is
also a Case 2 bifurcation at $\mu=\mu_1$. There are two subcases concerning the question, which bifurcation occurs first, determined by the criterion
\[
    \mu_1 > \mu_2 \quad\Longleftrightarrow\quad h < -\frac{2}{3}m^2 \,.
\]
Codimension-two bifurcations occur whenever $\mu_1=\mu_j$, i.e. $h = -2m^2/(j^2-1)$ for some $j\ge 2$. These observations are illustrated in the
$(m,h)$-plane in Figure \ref{fig:cases}.
\end{enumerate}
\end{remark}

\begin{figure}
\centering
\includegraphics{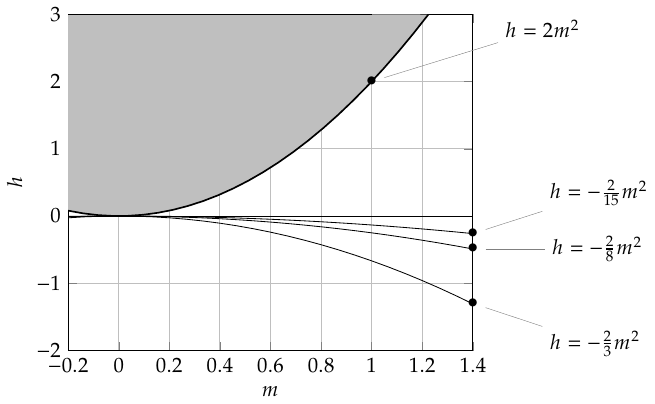}
%\begin{tikzpicture}

    %\tikzset{
        %hatch distance/.store in=\hatchdistance,
        %hatch distance=10pt,
        %hatch thickness/.store in=\hatchthickness,
        %hatch thickness=2pt
    %}

    %\makeatletter
    %\pgfdeclarepatternformonly[\hatchdistance,\hatchthickness]{flexible hatch}
    %{\pgfqpoint{0pt}{0pt}}
    %{\pgfqpoint{\hatchdistance}{\hatchdistance}}
    %{\pgfpoint{\hatchdistance-1pt}{\hatchdistance-1pt}}%
    %{
        %\pgfsetcolor{\tikz@pattern@color}
        %\pgfsetlinewidth{\hatchthickness}
        %\pgfpathmoveto{\pgfqpoint{0pt}{0pt}}
        %\pgfpathlineto{\pgfqpoint{\hatchdistance}{\hatchdistance}}
        %\pgfusepath{stroke}
    %}
    %\makeatother

    %\begin{axis}[clip=false, grid,xmin=-0.2,xmax=1.4,ymin=-2,ymax=3, xlabel=$m$, ylabel=$h$]
        %\addplot[name path=crit1, thick, mark=none,domain=-0.2:1.224] {2*x^2};
        %\addplot[name path=zero, mark=none,domain=-0.2:1.4] {0};
        %\addplot[name path=zero1, mark=none,domain=-0.2:1.4] {-2/15*x^2};
        %\addplot[name path=zero2, mark=none,domain=-0.2:1.4] {-2/8*x^2};
        %\addplot[name path=zero3, mark=none,domain=-0.2:1.4] {-2/3*x^2};
        %\addplot[name path=top, mark=none,domain=-0.2:1] {3};

        %\addplot [fill={white!50!gray}] fill between[
            %of = crit1 and top,
            %];

        %\node[pin={[pin distance=2cm]16:$h = 2m^2$}] at (axis cs:1,2) {$\bullet$};
        %\node[pin={[pin distance=1cm]16:$h = -\frac{2}{15}m^2$}] at (axis cs:1.4,-0.261) {$\bullet$};
        %\node[pin={[pin distance=1cm]0:$h = -\frac{2}{8}m^2$}] at (axis cs:1.4,-0.49) {$\bullet$};
        %\node[pin={[pin distance=1cm]-16:$h = -\frac{2}{3}m^2$}] at (axis cs:1.4,-1.3) {$\bullet$};

    %\end{axis}
%\end{tikzpicture}
\caption{Regions of different bifurcation behaviour in the $(m,h)$-plane according to Proposition \ref{prop:lin}: No bifurcations above the parabola $h=2m^2$.
Only Case 1 bifurcations between the parabola and the $m$-axis. Case 1 and Case 2 bifurcations below the $m$ axis, with codimension-two bifurcations on
the parabolas $h=-2m^2/(j^2-1)$, $j\ge 2$. The first bifurcation is a Case 2 bifurcation below the parabola $h=-2m^2/3$, and a Case 1 bifurcation with $j=2$
otherwise.}
\label{fig:cases}
\end{figure}

\begin{proof}
By the smoothness of solutions of ordinary differential equations we can employ Fourier representation and write
%Because of the periodicity of $\rho_1$ and $\theta_1$ it is convenient to use the Fourier representations
\[
   \rho_1(s) = \sum_{k\in\mathbb{Z}} \widehat\rho_{1,k} e^{iks} \,,\qquad \theta_1(s) = \sum_{k\in\mathbb{Z}} \widehat\theta_{1,k} e^{iks} \,.
\]
We keep the inhomogeneities for the moment, since this will be useful for the proof of the following result, and we use their Fourier series
\[
   f(s) = \sum_{k\in\mathbb{Z}} \widehat f_k e^{iks} \,,\qquad g(s) = \sum_{k\in\mathbb{Z}} \widehat g_k e^{iks} \,.
\]
The constraints \eqref{eq:constraints_lin} imply
\beq\label{Fourier-constr}
   \widehat\rho_{1,0} = 0 \,,\qquad \widehat\theta_{1,\pm 1} = \frac{1}{2\pi}(\alpha_y \pm i\alpha_x) \,.
\eeq
Comparing Fourier coefficients for $k=0$ in \eqref{eq:ELone}, \eqref{eq:ELtwo} implies
\beq\label{Fourier0}
   \lambda_{M1} = -\widehat f_0 \qquad\mbox{and}\qquad \widehat g_0=0 \,,
\eeq
where the latter has to be seen as a solvability condition for the inhomogeneous problem. Coefficients for $k=\pm 1$ in \eqref{eq:ELone} and
\eqref{eq:ELtwo} give
\begin{align}\label{Fourier1rho}
   &-\left(\veps - \veps_1\right)\widehat\rho_{1,\pm 1} = \widehat f_{\pm 1} - \frac{m}{2\pi}(\alpha_x \mp i\alpha_y) \,,\\
   &  \pm mi \widehat\rho_{1,\pm 1} \mp \frac{i}{2}\lambda_{x1} - \frac{1}{2}\lambda_{y1} = \widehat g_{\pm 1} + \frac{1}{2\pi}(\alpha_y \pm i\alpha_x)\,.\label{Fourier1theta}
\end{align}
For coefficients with $|k|\ge 2$ we obtain
\[
   - \left( \veps k^2 + \frac{h}{2}\right)\widehat\rho_{1,k} - ikm \widehat\theta_{1,k} = \widehat f_k \,,\qquad -k^2\widehat\theta_{1,k} + ikm\widehat\rho_{1,k} = \widehat g_k \,,
\]
implying
\beq\label{Fourierj}
   -k^2(\mu-\mu_{|k|})\widehat \rho_{1,k} = \widehat f_k - \frac{im}{k}\widehat g_k \,,\qquad \widehat\theta_{1,k} - \frac{im}{k}\widehat\rho_{1,k} = - \frac{1}{k^2}\widehat g_k\,.
\eeq
The results follow immediately from the homogeneous ($f=g=\alpha_x=\alpha_y=0$) versions of \eqref{Fourier-constr}--\eqref{Fourierj},
using the auxiliary conditions \eqref{eq:theta0_lin}.
\end{proof}

\begin{lemma}[Solvability conditions] \label{lem:lin}
\begin{description}
\item[Case 1:] Problem \eqref{eq:ELone}--\eqref{eq:theta0_lin} with $0<\mu=\mu_j\ne\mu_1$, $j\ge 2$, has a solution if and only if
\[
  j\intfull{f(s)\cos(js)} = m\intfull{g(s)\sin(js)} \,,
\] \beq\label{solv1}
   j\intfull{f(s)\sin(js)} = -m\intfull{g(s)\cos(js)} \,,\qquad\mbox{and}\qquad \intfull{g(s)} = 0\,.
\eeq
\item[Case 2:] Problem \eqref{eq:ELone}--\eqref{eq:theta0_lin} with $0<\mu=\mu_1\ne\mu_j$, $\forall\,j\ge 2$, has a solution if and only if
\beq\label{solv2}
  \intfull{f(s)\cos s} = m\alpha_x \,,\qquad \intfull{f(s)\sin s} = m\alpha_y \,,\qquad\mbox{and}\qquad\intfull{g(s)} = 0\,.
\eeq
\end{description}
\end{lemma}

\begin{proof}
For Case 1 the solvability conditions follow from \eqref{Fourier0}, \eqref{Fourierj}, and for Case 2 from \eqref{Fourier0}, \eqref{Fourier1rho}.
\end{proof}

%%%%%%%%%%%%%%%%%%%%%%%%%%%%%%%%%%%%%%%%%%%%%%%%%%%%%%%%%%%%%%%%%%%%%%%%%%%%%%%%%%%%
\subsection{Asymptotic expansion around bifurcation points}\label{sec:hot}

For the codimension-one bifurcations identified above (Cases 1 and 2 in Proposition \ref{prop:lin}), the existence of bifurcating solution branches is
guaranteed by general results on bifurcations from simple eigenvalues \cite{crandall_bifurcation_1971}. The local shape of these branches will be analyzed by
perturbation expansions. By the presence of a flip symmetry in problem \eqref{eq:ELrho}--\eqref{eq:theta0}, pitchfork bifurcations can be expected, at least
generically. For the bifurcation at $\mu=\mu_j$, $j\in\mathbb{N}$, we therefore introduce
\[
   \mu = \mu_j - \sigma A^2 \,,\qquad 0<A\ll 1\,,\quad \sigma\in \{1,-1\} \,.
\]
The small parameter $A$ measures the distance from the bifurcation point, whereas the sign $\sigma$, to be determined by the analysis, tells us
whether the bifurcation is \emph{supercritical} for $\sigma>0$ or \emph{subcritical} for $\sigma<0$. This convention is in line with the scenario of
decreasing $\mu$ (see Remark \ref{rem:lin}, 5.). The solution $\bar u=(\rho,\theta,\lambda_x,\lambda_y,\lambda_M)$ of \eqref{eq:ELrho}--\eqref{eq:theta0}
will be approximated by an asymptotic expansion
\beq
\label{eq:perturbationansatz}
  \bar u = \bar u_0 + A \bar u_1 + A^2 \bar u_2 + A^3 \bar u_3 + O(A^4) \,,
\eeq
where the reason for going up to third order will become apparent below.

\begin{remark}[Bifurcation diagram for classical elasticae]
The expectation of pitchfork bifurcations and, thus, the ansatz \eqref{eq:perturbationansatz} can also be motivated by the bifurcation diagram for classical elasticae (e.g.\ \cite[Ch.\ 7]{marsden_mathematical_1994}). The diagram shows an infinite series of bifurcations similar to the series of Case 1 bifurcations
in \eqref{eq:ELrho}--\eqref{eq:theta0}. In the classical elastica problem all these bifurcations are supercritical pitchforks.
\end{remark}

The notation in \eqref{eq:perturbationansatz} is consistent with the above. The trivial rotationally symmetric solution of \eqref{eq:ELrho}--\eqref{eq:theta0}
is denoted by $\bar u_0$, and the first correction $\bar u_1$ has to satisfy the homogeneous version ($f=g=\alpha_x=\alpha_y=0$) of the linearized problem \eqref{eq:ELone}--\eqref{eq:theta0_lin} with $\mu=\mu_j$, whose
solution is unique up to a scalar constant ($a_1$ in Case 1 and $b_1$ in Case 2). The problems for $\bar u_2$ and $\bar u_3$ are determined by substituting the
ansatz \eqref{eq:perturbationansatz} into \eqref{eq:ELrho}--\eqref{eq:theta0}, expanding the nonlinearities and comparing coefficients of $A^2$ and $A^3$.
Both $\bar u_2$ and $\bar u_3$ solve inhomogeneous versions of the linearized problem \eqref{eq:ELone}--\eqref{eq:theta0_lin} with $\mu=\mu_j$ and
with the inhomogeneities
\begin{align}
& f_2 = \frac{m}{2}\dot\theta_1^2 + h\rho_1\dot\theta_1\,,\qquad
  g_2 = -  \dds\Big(m\rho_1\dot\theta_1+{\frac{h}{2}}\rho_1^2\Big) -\theta_1(\lambda_{x1}\cos s+\lambda_{y1}\sin s) \,,\label{eq:inhom21}\\
& \vect{\alpha_{x2}\\ \alpha_{y2}} = \frac{1}{2}\intfull{\theta_1^2\vect{\cos s\\ \sin s}} \,,\label{eq:inhom22}
\end{align}
for $\bar u_2$, and
\begin{align}
& f_3 = \sigma \ddot\rho_1 + m\dot\theta_1\dot\theta_2 +
  \frac{h}{2}\Big(2\rho_2\dot\theta_1+2\rho_1\dot\theta_2+\rho_1\dot\theta_1^2\Big) \,,\label{eq:inhom31}\\
& g_3
   = - \dds\Big(m\rho_1\dot\theta_2+m\rho_2\dot\theta_1+\txt{\frac{h}{2}}\rho_1^2\dot\theta_1+h\rho_1\rho_2  \Big) \nonumber\\
   &\quad -\theta_1(\lambda_{x2}\cos s+\lambda_{y2}\sin s)-\theta_2(\lambda_{x1}\cos s+\lambda_{y1}\sin s)
       +\frac{1}{2}\theta_1^2(\lambda_{x1}\sin s-\lambda_{y1}\cos s) \,,\label{eq:inhom32}\\
& \vect{\alpha_{x3}\\ \alpha_{y3}}
   = \intfull{\vect{\theta_1\theta_2\cos s - \frac{1}{6}\theta_1^3\sin s\\ \theta_1\theta_2\sin s + \frac{1}{6}\theta_1^3\cos s}} \,,\label{eq:inhom33}
\end{align}
for $\bar u_3$. Note that the
inhomogeneities depend on lower-order terms. So the terms in the asymptotic expansion \eqref{eq:perturbationansatz} can be computed recursively. However, this comes with two problems, which are connected: the solution of the linearized problem is not unique (see Proposition \ref{prop:lin}),
and it does not have a solution for arbitrary inhomogeneities (see Lemma \ref{lem:lin}). The strategy is to recover the lacking information for uniqueness from
the solvability conditions for higher-order problems.
It will turn out (as a consequence of the above mentioned flip symmetry) that the inhomogeneities \eqref{eq:inhom21}, \eqref{eq:inhom22} of the
second-order problem satisfy the solvability
conditions, no matter what the value of the missing first-order constant ($a_1$ in Case 1 and $b_1$ in Case 2) is. This is the reason why the
third-order problem has to be considered, whose solvability condition will provide an equation for the missing first-order constant. In the following the
essential results of these straightforward but lengthy computations will be given. They have been carried out manually and checked with the
help of {\tt MATHEMATICA}.

\subsection*{Case 1 bifurcations}
%%%%%%%%%%%%%%%%%%%%%%%%%%%%%%%%%%%%%%%%%%%%%%%%%%%%%%%%%%%%%%%%%%%%%%%%%%%%%%%%%%%%

The goal is to determine the value of the constant $a_1$ in the first-order correction $\bar u_1$ of the expansion \eqref{eq:perturbationansatz}, given in
\eqref{u11}. The first step is the computation of the second-order terms.

\begin{lemma}[Case 1: second-order solution] \label{lem:Case0order2} Let $j\ge 2$ and $\bar u_1$ be given by \eqref{u11}. Then every solution of
\eqref{eq:ELone}--\eqref{eq:theta0_lin} with $0<\mu=\mu_j\ne\mu_1$ and with the inhomogeneities given by \eqref{eq:inhom21}, \eqref{eq:inhom22}
can be written as
\begin{align}
& \rho_2(s)=a_2\sin(js) + \frac{a_1^2(m(m^2-h))}{2(2m^2-h)}(\cos(2js)-\cos(js)) \,,\nn\\
& \theta_2(s)={a_2 \frac{m}{j}(\cos(js)-1)-\frac{a_1^2(6m^4-6m^2h+h^2)}{8j(2m^2-h)}\sin(2js)} \,,\qquad a_2\in\mathbb{R}\,,\label{eq:u21}\\
& \lambda_{x2}=\lambda_{y2}=0,\quad{\lambda_{M2}=-\frac{a_1^2 m(m^2-2h)}{4}} \,.\nn
\end{align}
\end{lemma}

\begin{lemma}[Case 1: the missing constant] \label{lem:Case0order3} Let $j\ge 2$, $0<\mu_j\ne\mu_1$, and $\bar u_1, \bar u_2$ be given by \eqref{u11},
\eqref{eq:u21}. Then the inhomogeneities given by \eqref{eq:inhom31}--\eqref{eq:inhom33} satisfy the solvability conditions \eqref{solv1}, if and only if
\beq
\label{eq:noresonance0}
a_1\left(j^2\sigma - a_1^2\frac{Z(m,h)}{8(2m^2-h)}\right)=0 \,,
\qquad\text{with}\quad
Z(m,h):=-14m^6+36m^4h-18m^2h^2+h^3 \,.\quad
\eeq
\end{lemma}

This shows that Case 1 bifurcations are pitchforks if and only if $Z(m,h)\ne 0$. The amplitude of the first-order term along the bifurcating branch is determined by
the nontrivial solutions of  \eqref{eq:noresonance0}:
\beq\label{eq:a1Case0}
a_1^2=\frac{8j^2\sigma(2m^2-h)}{Z(m,h)} \,,
\eeq
which shows for the criticality $\sigma = \mbox{sign }Z(m,h)$ that the bifurcation is \emph{supercritical} for $Z(m,h)>0$ and \emph{subcritical} for $Z(m,h)<0$.
Writing $Z(m,h)=m^6(z^3-18z^2+36z-14)$ with $z:=h/m^2<2$ shows that $Z(m,h)>0$ and $2m^2>h$ are equivalent to
\beq
\label{eq:z12}
z_1<z<z_2\quad\text{with}\quad z_1\approx 0.52\quad\text{and}\quad z_2\approx 1.71.
\eeq
Consequently, a supercritical pitchfork bifurcation occurs in the parabolic region
$\{(m,h)\in\RR^2:\:z_1m^2<h<z_2m^2\}$.
Conversely, if $(m,h)$ is such that $Z(m,h)<0$, which holds for $h<z_1m^2$ or $2m^2>h>z_2m^2$, then the bifurcation is subcritical. The situation is illustrated in Figure \ref{fig:Zmh}. Note that the criticality is independent from $j$. For a fixed pair $(m,h)$ the whole series of Case 1 bifurcations has
the same criticality.

%%%%%%%%%%%%%%%
\subsection*{Case 2 bifurcations}

\begin{figure}
\centering
\includegraphics{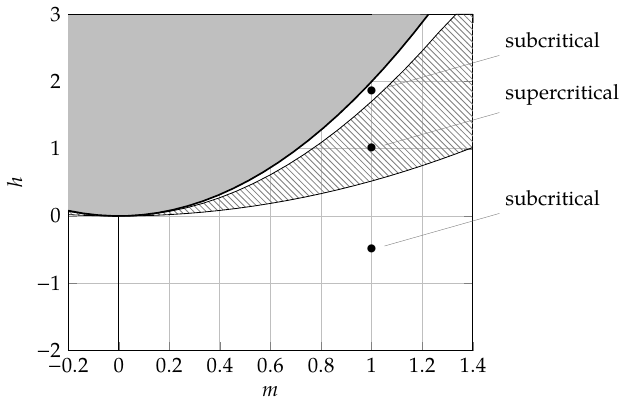}
%\begin{tikzpicture}

    %\tikzset{
        %hatch distance/.store in=\hatchdistance,
        %hatch distance=10pt,
        %hatch thickness/.store in=\hatchthickness,
        %hatch thickness=2pt
    %}

    %\begin{axis}[clip=false, grid,xmin=-0.2,xmax=1.4,ymin=-2,ymax=3, xlabel=$m$, ylabel=$h$]
        %\addplot[name path=crit1, thick, mark=none,domain=-0.2:1.224] {2*x^2};
        %\addplot[name path=zero, mark=none,domain=-0.2:1.4,opacity=0] {0};
        %\addplot[name path=zero1, mark=none,domain=-0.2:1.4] {0.52036*x^2};
        %\addplot[name path=zero2, mark=none,domain=-0.2:1.4] {min(3,1.7056*x^2)};
        %\addplot[name path=top, mark=none,domain=-0.2:1] {3};

        %\draw (axis cs:0,-2) -- (axis cs:0,0);

        %\addplot [pattern=north west lines, pattern color=gray,hatch distance=5pt, hatch thickness=0.5pt] fill between[
            %of = zero1 and zero2,
            %];
        %\addplot [fill={white!50!gray}] fill between[
            %of = crit1 and top,
            %];
        %\node[pin={[pin distance=2cm]16:supercritical}] at (axis cs:1,1) {$\bullet$};
        %\node[pin={[pin distance=2cm]16:subcritical}] at (axis cs:1,1.85) {$\bullet$};
        %\node[pin={[pin distance=2cm]16:subcritical}] at (axis cs:1,-0.5) {$\bullet$};
    %\end{axis}
%\end{tikzpicture}
\caption{Contour plot of $Z(m,h)$ given by  \eqref{eq:noresonance0}.  The solution in Case 1 has the structure of a supercritical pitchfork bifurcation whenever $Z(m,h)>0$ and $h<2m^2$. These conditions define the crosshatched region $z_1m^2<h<z_2m^2$ below the parabola $h=2m^2$ (black line); $z_1\approx 0.52$ and $z_2\approx 1.71$, see \eqref{eq:z12}. Conversely, if $Z(m,h)<0$ which is true when $h<z_1m^2$ or in the narrow white region given by $z_2m^2<h<2m^2$, then the bifurcation is subcritical.}
\label{fig:Zmh}
\end{figure}

\begin{lemma}[Case 2: second-order solution] \label{lem:Case2order2} Let $\bar u_1$ be given by \eqref{u12}. Then every solution of
\eqref{eq:ELone}--\eqref{eq:theta0_lin} with $0<\mu=\mu_1\ne\mu_j$, $\forall j\ge 2$, and with the inhomogeneities given by \eqref{eq:inhom21}, \eqref{eq:inhom22}
can be written as
\begin{align}
& \rho_2(s)=b_2\sin s + \frac {b_1^2 mh}{2(2m^2+3h)} (\cos(2s)-\cos s) \,,\qquad b_2\in\mathbb{R}\,,\nn\\
& \theta_2(s)=\frac{b_1^2 3h^2}{8(2m^2+3h)} \sin(2s) \,,\label{eq:u22}\\
& \lambda_{x2}= -\frac{b_1^2 m^2 h}{2(2m^2+3h)}\,,\quad \lambda_{y2}=b_2 m \,,\quad \lambda_{M2}=0 \,.\nn
\end{align}
\end{lemma}

\begin{lemma}[Case 2: the missing constant] \label{lem:Case2order3} Let $0<\mu_1\ne\mu_j$, $\forall j\ge 2$, and $\bar u_1, \bar u_2$ be given by \eqref{u12},
\eqref{eq:u22}. Then the inhomogeneities given by \eqref{eq:inhom31}--\eqref{eq:inhom33} satisfy the solvability conditions \eqref{solv2}, if and only if
\beq
\label{eq:noresonance11}
b_1\Big(\sigma +b_1^2\,\frac{3h^3}{8(2m^2+3h)}\Big)=0 \,.\quad
\eeq
\end{lemma}

This shows that Case 2 bifurcations are pitchforks, since
\[
    b_1^2 = -\frac{8\sigma(2m^2+3h)}{3h^3} \,
\]
is finite by $\mu_1 = -h/2 >0$ and nonvanishing by $2m^2+3h = 8(\mu_2-\mu_1) \ne 0$. The bifurcation is subcritical for $2m^2+3h<0$,
i.e. when the Case 2 bifurcation is the first one for decreasing $\mu$ (see Remark \ref{rem:lin}, 5.). It is supercritical for $2m^2+3h>0$.
It is also noteworthy that the circular shape of the trivial solution curve is now perturbed at the order of $A^2$ with the perturbation given in \eqref{eq:u22}.
The leading order density perturbation $\rho_1$ has both its extrema coinciding either with the maxima of the curvature perturbation $\dot\theta_2$
(in the subcritical case) or with its minima (in the supercritical case). This can be verified numerically, see Cases (iv) and (v) in Section~\ref{sec:numerics results} and Figure~\ref{fig:numerics shapes} for $j=1$.

%%%%%%%%%%%%%%%%%%%%%%%%%%%%%%%%%%%%%%%%%%%%%%%%%%%%%%%%%%%%%%%%%%%%%%%%%%%%
\subsection{Energy and stability}

As an indicator for the stability of bifurcating solutions, we investigate the changes of the energy \eqref{eq:Ereg} along bifurcating branches.
For this purpose, we substitute $\mu = \mu_j - \sigma A^2$ and the asymptotic expansion \eqref{eq:perturbationansatz} of the bifurcating solution into the energy and re-expand:
\[
E_\veps(\rho,\theta)=\frac{1}{2}\int_0^{2\pi}(\beta(\rho)\,\dot\theta^2+\veps\dot\rho^2)\,\dint s
=E_0+AE_1+A^2E_2+A^3E_3+A^4E_4+ O(A^5) \,.
\]
For the coefficients we obtain (all computations of this section again verified with {\tt MATHEMATICA})
\begin{align}
E_0&=\frac{1}{2}\int_0^{2\pi}\dint s=\pi=E_{\mu_j}(\rho_0,\theta_0) \,,\qquad E_1=\frac{1}{2}\int_0^{2\pi}\left(m\rho_1+2\dot\theta_1\right)\dint s = 0 \,,
\label{eq:E01}\\
E_2&=\frac{1}{2}\int_0^{2\pi}\left(2m\rho_1\dot\theta_1+\txt{\frac{h}{2}}\rho_1^2+\dot\theta_1^2+\mu_j\dot\rho_1^2\right)\dint s \,,
\label{eq:E2}\\
E_3&=\frac{1}{2}\int_0^{2\pi}\left(2m\rho_1\dot\theta_2+2m\rho_2\dot\theta_1+m\rho_1\dot\theta_1^2
 +h\rho_1\rho_2+h\rho_1^2\dot\theta_1+ 2 \dot\theta_1\dot\theta_2 +2\mu_j\dot\rho_1 \dot\rho_2\right)\dint s \,,
\label{eq:E3}\\
E_4&=\frac{1}{2}\int_0^{2\pi}\biggl(2m\rho_1\dot\theta_3+2m\rho_3\dot\theta_1+ m\rho_2\dot\theta_1^2+2m\rho_2\dot\theta_2
+2m\rho_1\dot\theta_1\dot\theta_2+\frac{h}{2}\rho_2^2+h\rho_1\rho_3\nn\\
&\hspace{4.5em}
+2h\rho_1\rho_2\dot\theta_1+{\frac{h}{2}}\rho_1^2\dot\theta_1^2+h\rho_1^2\dot\theta_2
+2 \dot\theta_1 \dot\theta_3+\dot\theta_2^2
+\mu_j\dot\rho_2^2 +2\mu_j\dot\rho_1 \dot\rho_3-\sigma\dot\rho_1^2\biggr)\dint s \,.
\label{eq:E4}
\end{align}

\begin{lemma}[Energy expansion]\label{lem:expansion}
Let $j\in\mathbb{N}$, let $\bar u_1,\bar u_2$ be given by \eqref{u11}, \eqref{eq:u21} in Case 1, $j\ge 2$, or by \eqref{u12}, \eqref{eq:u22} in Case 2, $j=1$.
Let the constants $a_1$ in Case 1 or $b_1$ in Case 2 be chosen such that the third-order inhomogeneities \eqref{eq:inhom31}--\eqref{eq:inhom33}
satisfy the solvability conditions of Lemma \ref{lem:Case0order3} in Case 1 and Lemma \ref{lem:Case2order3} in Case 2. Let $\bar u_3$ be a corresponding
solution of the linearized problem \eqref{eq:ELone}--\eqref{eq:theta0_lin} with $\mu=\mu_j$. Then the coefficients in the energy expansion above satisfy
$E_1=E_2=E_3=0$ in both cases, as well as
\beq\label{eq:E4Case010}
\hspace{-1em}E_{4}=-\frac{2\pi j^4(2m^2-h)}{Z(m,h)}
\eeq
in Case 1 with the notation of Lemma \ref{lem:Case0order3}. In Case 2 we have
\[
E_4=\frac{2\pi (3h+2m^2)}{3h^3} \,.
\]
\end{lemma}

As expected, the sign of $E_4$ goes with criticality of the bifurcating branch. Stability is gained ($E_4<0$) along supercritical branches and lost ($E_4>0$)
along subcritical branches. In particular, this can be expected to decide the stability of the branch corresponding to the first bifurcation for decreasing $\mu$.

%%% Gaspard %%%%%%%%%%%%%%%%%%%%%%%%%%%%%%%%%%%%%%%%%%%%%%%%%%%%%%%%%%%%%%%%
\section{Numerical continuation of bifurcation branches}\label{sec:numerics}
%%%%%%%%%%%%%%%%%%%%%%%%%%%%%%%%%%%%%%%%%%%%%%%%%%%%%%%%%%%%%%%%%%%%%%%%%%%%

\subsection{Discretization}
\label{sec:discretization}

The Euler-Lagrange equations \eqref{eq:ELrho} and \eqref{eq:ELtheta} are discretized by finite differences as follows.
For $N\in\mathbb{N}$ we discretize the interval $[0,L]$ by introducing $\Delta s = L (N-1)^{-1}$ and $s_i = i \Delta s$, $0 \le i \le N-1$, which naturally leads to the (abuse of) notation
$\rho=({\rho}_i)_{i=1}^{N-1}$, $\theta=({\theta}_i)_{i=1}^{N-1}$ with
${\rho}_i = \rho(s_i)$, ${\theta}_i = \theta(s_i)$.
This can be thought of as considering a polygonal approximation of the curve $\gamma$, where $\theta_i$ is the angle of the $i$\textsuperscript{th} side and where $\rho_i$ is a piecewise constant approximation of $\rho$ on that side (\emph{i.e.} $\rho_i$ is \emph{not} associated to a vertex).

Using the notation $u = (\rho, \theta)$ and $\Lambda=(\lambda_x,\lambda_y,\lambda_M)$,
we propose the following natural finite differences approximation for \eqref{eq:ELrho} and \eqref{eq:ELtheta},
respectively:
\begin{align}
    EL_\rho(u, \Lambda) &= \veps \left(\frac{\rho_{i-1} - 2 \rho_i + \rho_{i+1}}{\Delta s^2}\right) - \frac{1}{2} \beta'(\rho_i) \left(\frac{\rho_{i+1} - \rho_{i-1}}{2 \Delta s}\right)^2 - \lambda_M = 0\,,
    \label{eq:EL rho discretized}
    \\
    EL_\theta(u, \Lambda) &= \frac{1}{\Delta s}\left(\beta\left(\frac{\rho_{i+1}+\rho_i}{2}\right)\left(\frac{\theta_{i+1} - \theta_{i}}{\Delta s}\right) - \beta\left(\frac{\rho_{i}+\rho_{i-1}}{2}\right) \left(\frac{\theta_{i} - \theta_{i-1}}{\Delta s}\right)\right) + \lambda_x \sin \theta_i - \lambda_y \cos \theta_i = 0\,, \label{eq:EL theta discretized}
\end{align}
for $0 \le i \le N-1$.

To remove the degree of freedom associated to solid rotations, we can set $\theta(0) = 0$ at the continuous level. This is reflected by the choice $\theta_0 = 0$ at the discrete level.
We also need to provide values for indices $i = {-1, N}$. Again by periodicity we set
$\rho_{-1} = \rho_{N-1}$, $\rho_{N} = \rho_0$, $\theta_{-1} = \theta_{N-1} - 2\pi$, and $\theta_{N} = \theta_0 + 2\pi$. Thus, we only consider \eqref{eq:EL rho discretized} for $0 \le i < N-1$ and \eqref{eq:EL theta discretized} for $0 < i < N-1$.

The mass and closedness constraints can be naturally approximated as
\begin{align*}
    C_M(u, \Lambda) &= \Delta s \sum_{i=0}^{N-1} \rho_i - M = 0\,,\\
      \begin{pmatrix} C_{x}\\  C_{y}\end{pmatrix}(u, \Lambda) &= \Delta s \sum_{i=0}^{N-1} \begin{pmatrix}\cos \theta_i \\ \sin \theta_i\end{pmatrix} = 0\,.
\end{align*}

We are left with a system of $2N + 1$ nonlinear equations which we propose to solve using a damped Newton method. If we assume $\bar{u}^k = \left(u^k, \Lambda^k\right)$ to be known, we look for $\bar{u}^{k+1}$ as a solution to
\begin{equation}
    J(\bar{u}^k)\, (\bar{u}^{k+1} - \bar{u}^k) = - \eta\, r(\bar{u}^k)\,,
    \label{eq:newton non linear system}
\end{equation}
where $r(\bar{u}^k) = \left(EL_\rho, EL_\theta, C_x, C_y, C_M\right)$,
$J$ is the Jacobian of $r$ with respect to $\bar{u}$, and
$\eta \le 1$ is the damping parameter with $\eta = 1$ corresponding to the standard Newton's method.

%\textcolor{red}{Do we need the exact expression of $J$? Probably not? --- Gaspard}
\subsubsection{Continuation of branches}

To follow numerically the bifurcation branches, one can pick some $\veps$ close to the critical value $\mu_j$ and take as initial value a perturbation of the trivial solution (corresponding to the circle with homogeneous $\rho$).
The position of $\veps$ relative to the critical value and the amplitude of the bifurcation are given precisely by the results of Section \ref{sec:bifurcation}.
Solving \eqref{eq:newton non linear system} yields a numerical
approximation of a critical point, which can be used as initial condition for neighbouring values of $\veps$. By iterating this process, one can move along the branch, provided that
\begin{enumerate}
    \item the branch is locally smooth (for example, this is not the case when $\rho$ hits zeros of $\beta$, where one could expect the branch to terminate),
    \item the features of the solution can be resolved by the discretization with the chosen value of $N$.
\end{enumerate}

\subsection{Choice of parameters}
In what follows we will consider a number of different situations, depending on the choice of parameters $(m, h)$ for the function $\beta$, which will be of the form \eqref{eq:beta-ass} with $\beta_0=1$, namely
\begin{equation*}
    \beta(\rho) =  1 + m\left(\rho - \rho_0\right) + \frac{h}{2} \left(\rho-\rho_0\right)^2\,.
\end{equation*}
As before we will take $M = L = 2\pi$, so that $\rho_0 = 1$. We consider six sets of parameters:
\begin{enumerate}[(i)]
    \item $(m,h) = (1,1.85)$ corresponding to Case 1 with $\sigma = -1$ (subcritical bifurcation),
    \item $(m,h) = (1,1)$ corresponding to Case 1 with $\sigma = 1$ (supercritical bifurcation),
    \item $(m,h) = (1,1/4)$ corresponding to Case 1 with $\sigma = -1$ (subcritical bifurcation),
    \item $(m,h) = (1, -1/2)$ corresponding to Case 1 with $\sigma = -1$ (subcritical bifurcation) and supercritical Case 2,
    \item $(m,h) = (1, -2)$ corresponding to Case 1 with $\sigma = -1$ (subcritical bifurcation) and subcritical Case 2,
    \item $(m,h) = (0, -1)$, which is similar to (v) in the special choice $m=0$. At first order, $\gamma$ should remain a circle, including for Case 1, as the correction coefficient $\theta_1 \equiv 0$ for $m=0$, see \eqref{u11}.
\end{enumerate}

For the definition of the different cases, we refer to Proposition \ref{prop:lin}. The parameters in (i) -- (vi) are represented in Figure \ref{fig:numerics bifurcation diagram}. The corresponding results are presented in Figures~\ref{fig:numerics bifurcation branches} and~\ref{fig:numerics shapes}.

\begin{figure}[htb!]
    \begin{center}
        \includegraphics{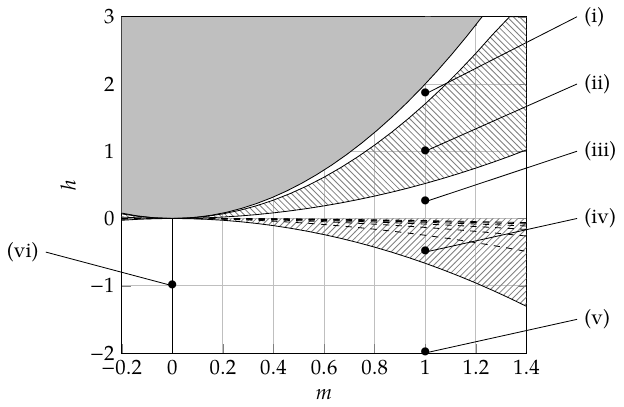}
    \end{center}
    \caption{The different sets of model parameters $(m,h)$ represented on the parameter space. The gray region corresponds to parameters which have no critical points except the trivial solution. The crosshatched region corresponds to supercritical bifurcations (Case 1 for $h>0$, Case 2 for $h<0$), and the plain white region to subcritical bifurcations. The dashed parabolas indicate where Case 3 occurs, for $j$ up to $8$.}
    \label{fig:numerics bifurcation diagram}
\end{figure}

\begin{figure}[htb!]
    \begin{center}
        \includegraphics{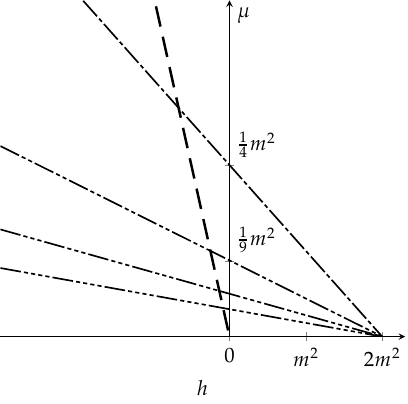}
        %\begin{tikzpicture}
        %\tikzset{
            %dashone/.style={
                %dash pattern=on 10pt off 5pt,
            %},
            %dashtwo/.style={
                %dash pattern=on 10pt off 1.5pt on 2pt off 1.5pt,
            %},
            %dashthree/.style={
                %dash pattern=on 10pt off 1.5pt on 2pt off 1.5pt on 2pt off 1.5pt,
            %},
            %dashfour/.style={
                %dash pattern=on 10pt off 1.5pt on 2pt off 1.5pt on 2pt off 1.5pt on 2pt off 1.5pt,
            %},
            %dashfive/.style={
                %dash pattern=on 10pt off 1.5pt on 2pt off 1.5pt on 2pt off 1.5pt on 2pt off 1.5pt on 2pt off 1.5pt,
            %}
        %}
            %\begin{axis}[ymin=0, ymax=0.49, xmax=2.3,
                %axis x line=bottom, axis y line=middle,
                %xlabel=$h$, ylabel=$\veps$,
                %xtick={0,1,2}, xticklabels={0, $m^2$, $2m^2$},
                %ytick={0.25, 0.11}, yticklabels={$\frac{1}{4}m^2$, $\frac{1}{9}m^2$},
                %yticklabel=\ ,
                %extra description/.code={
                    %% this generates custom y labels to implement
                    %% individual styles for every tick:
                    %\node[above right] at (axis cs:0,0.25) {$\frac{1}{4}m^2$};
                    %\node[above right] at (axis cs:0,0.111) {$\frac{1}{9}m^2$};
                %}]
                %% Case 0
                %\addplot[dashtwo, thick, domain=-3:2]{(1-x/2)/4};
                %\addplot[dashthree, thick, domain=-3:2]{(1-x/2)/9};
                %\addplot[dashfour, thick, domain=-3:2]{(1-x/2)/16};
                %\addplot[dashfive, thick, domain=-3:2]{(1-x/2)/25};

                %% Case 1.0
                %%\addplot[dashone, domain=-3:0]{-x/2};
                %%\addplot[dashtwo, domain=-3:0]{-x/2/4};
                %%\addplot[dashthree, domain=-3:0]{-x/2/9};
                %%\addplot[dashfour, domain=-3:0]{-x/2/16};
                %%\addplot[dashfive, domain=-3:0]{-x/2/25};

                %% Case 1.1
                %\addplot[dashone, very thick, domain=-3:0]{-x/2};
            %\end{axis}
        %\end{tikzpicture}
    \end{center}
    \caption{Critical values of $\veps$ for Case 1 (thin) and Case 2 (bold). The intersections correspond to (the degenerate) Case 3 which is not studied in this paper. The dashes indicate the value of $j$: ---~--- for $j=1$, ---~-~--- for $j=2$, etc.}
\end{figure}

\subsection{Results}
\label{sec:numerics results}

The method described above (Section~\ref{sec:discretization}) was implemented in \texttt{Julia} \cite{bezanson_julia_2017}.  Figure~\ref{fig:numerics bifurcation branches} presents the bifurcating branches both in terms of the amplitude of the density $\rho$ and in terms of the energy $E_\veps$. It offers a partial confirmation of the results of Section~\ref{sec:bifurcation} in that:
\begin{itemize}
    \item For Case (ii), $j \ge 2$ and (iv), $j = 1$, the bifurcation appears supercritical, i.e., the branch bifurcates to the left of the critical $\veps$. Additionally, the energy decreases close to the trivial state. These branches offer critical points of $E_\veps$ which are candidates to be global minimizers.
    \item For all other cases, the bifurcation is subcritical, i.e., the branch bifurcates to the right of the critical $\veps$, and the energy initially increases as one gets further from the trivial state.
\end{itemize}
Interestingly, Cases (i) and (iv) feature turning points, where the derivative of $E_\veps$ along the branch seems to change sign. In Case (i) it becomes negative, leading to critical points of lower energy with respect to the trivial state, and potentially global minimizers. This fact precludes uniqueness of minimizers of $E_\veps$ in general.

We were able to track an additional branch in Cases (i) to (iii), which seem to bifurcate from the $j=2$ branch. No analytical results are available at this point, but we can make the following observations. The corresponding shapes, presented in gray in Figure~\ref{fig:numerics shapes}, look like the ones obtained for $j=1$ in Cases (iv) to (vi). This justifies the placement in the first column, although $j$ has no meaning for this branch. In Cases (i) and (iii), it bifurcates from the $j=2$ branch with decreasing energy for the choice of parameter considered. Case (ii) is a bit different, in that the bifurcation leads to critical points of higher energy, although the branch features a turning point, after which $E_\veps$ starts decreasing and eventually becomes smaller than for the $j=2$ branch, for a given value of~$\veps$.

Other features of the critical points further along the branch can be seen in Figure~\ref{fig:numerics shapes}. For Cases (i) to (v) and $j>1$, one can identify the value of $j$ with the number of flatter sections in each closed curve. These correspond to higher values of $\rho$, which agree with the fact that for all choices of parameters presented here, $m\ge0$. This can be roughly thought as higher values of $\rho$ penalizing higher values of the curvature $\dot\theta$. For $j=1$ or in Case (vi), the situation is different, since the first-order correction $\theta_1 \equiv 0$. If one goes further in the expansion, one case expect that the next order correction $\theta_2$ have the form $\cos(2js)$, that is half the period of $\rho_1$. This could explain that in these cases, $\rho$ seems to have $j$ maxima when $\dot\theta$ has $2j$.

Additionally, \emph{far} from the bifurcation point and after potential turning points, one can distinguish Cases (i) and (ii) from Cases (iii) to (vi). For the former, $\veps$ decreases along the branch, $E_\veps$ decreases and $\rho$ seems to concentrate on flat sections. For the latter, the situation is the opposite: $\veps$ increases along the branch, $E_\veps$ increases and $\rho$ stays rather smooth.

\begin{remark}
    In Proposition~\ref{thm:trivial_unique_large_epsilon} it is stated that for $\beta$ bounded away from $0$, only the trivial state $(\rho_0, \theta_0)$ is a minimizer of $E_\veps$.
    The branches in Figure~\ref{fig:numerics bifurcation branches} which seem to continue far to large values of $\veps$ have an energy clearly larger than $\pi = E_\veps(\rho_0, \theta_0)$.
    We also recall that for the results presented here, the choice of $\beta$ is quadratic, and thus not bounded away from $0$. There is then no contradiction of our analysis.
\end{remark}

A systematic study of the stability in terms of the energy would be interesting, although probably necessarily limited to numerics, as it would help identifying local minimizers. Such an investigation is however out of the scope of this paper.

\begin{figure}[h!]
\makeatletter
\def\tikz@auto@anchor{%
    \pgfmathtruncatemacro\angle{atan2(\pgf@x,\pgf@y)-90}
    \edef\tikz@anchor{\angle}%
}
\makeatother

    \begin{center}
\vspace{-2em}
        \includegraphics{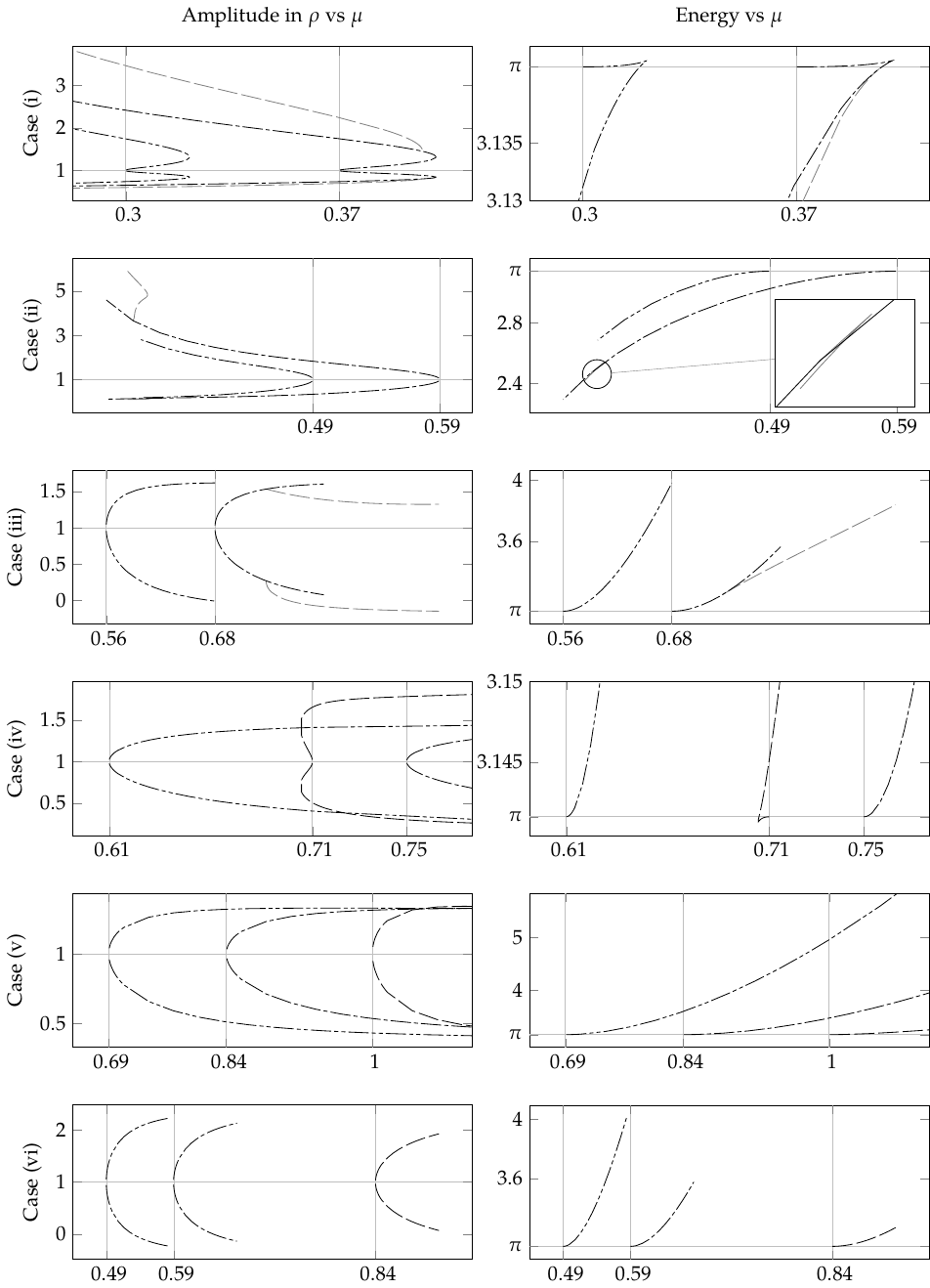}
    \end{center}
    \caption{Numerical results for Cases (i) to (vi), in columns, for $j=1,2,3$. The first column shows the amplitude in $\rho$, where the lower and greatest values of $\rho$ are represented. The horizontal gray line corresponds to the trivial solution, for which $\rho \equiv 1$.
        The second column shows the energy $E_\veps$, with the horizontal gray line again corresponding to the trivial solution, for which $E_\veps = \pi$. The dashes indicate the value of $j$ for each branch: ---~--- for $j=1$ (absent in (i) to (iii)), ---~-~--- for $j=2$, ---~-~-~--- for $j=3$. The gray vertical lines indicate the theoretical critical values for $\veps$. In Cases (i) to (iii), the secondary bifurcation branch is plotted in gray.
As detailed in~\eqref{eq:perturbationansatz}, at a supercritical (resp.\ subcritical) bifurcation point, the branch will appear for values of $\veps$ greater (resp.\ lower) than the critical value.}
    \label{fig:numerics bifurcation branches}
\end{figure}

\begin{figure}[h!]
    \begin{center}
        \begin{tabularx}{\textwidth}{cccc}
            & Case 2 & Case 1 & Case 1 \\
            & $j = 1$ & $j = 2$ & $j = 3$
            \\
            \raisebox{0.1\textwidth}{Case (i)} &
            \includegraphics[trim=200 10 200 10,clip,width=0.25\textwidth]{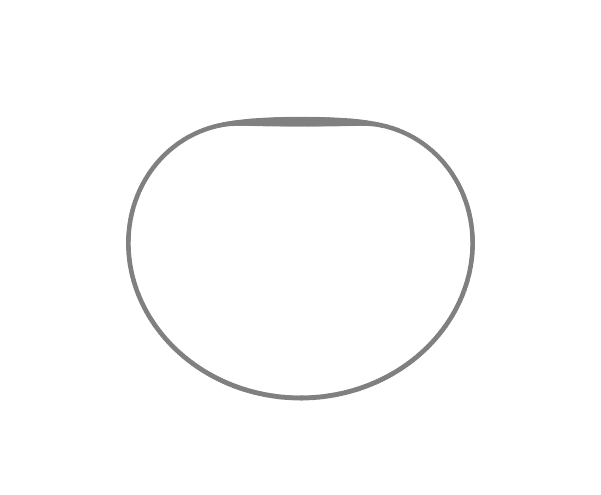}
                                                 &
            \includegraphics[trim=200 10 200 10,clip,width=0.25\textwidth]{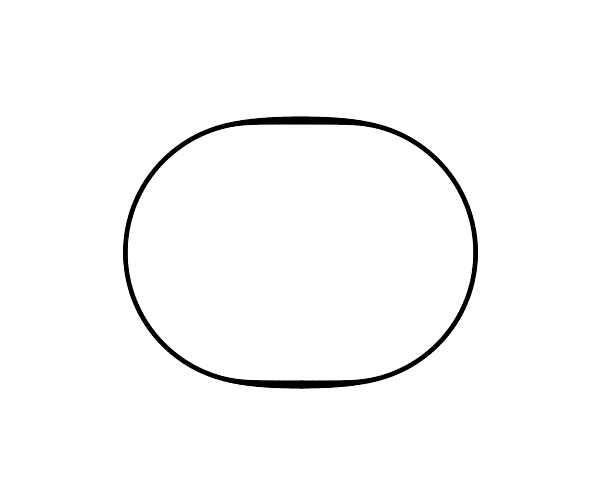}
                                               &
            \includegraphics[trim=200 10 200 10,clip,width=0.25\textwidth]{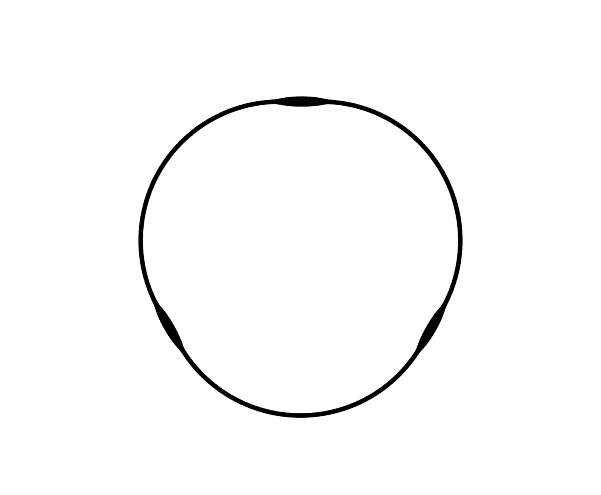}
        \\
            \raisebox{0.1\textwidth}{Case (ii)} &
            \includegraphics[trim=200 10 200 10,clip,width=0.25\textwidth]{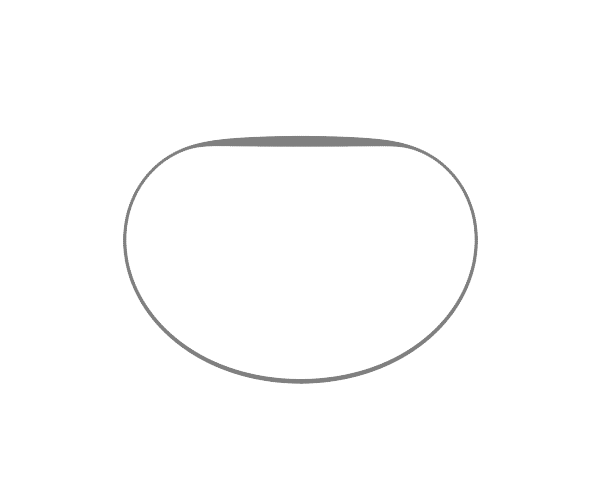}
                                                 &
            {\includegraphics[trim=200 10 200 10,clip,width=0.25\textwidth]{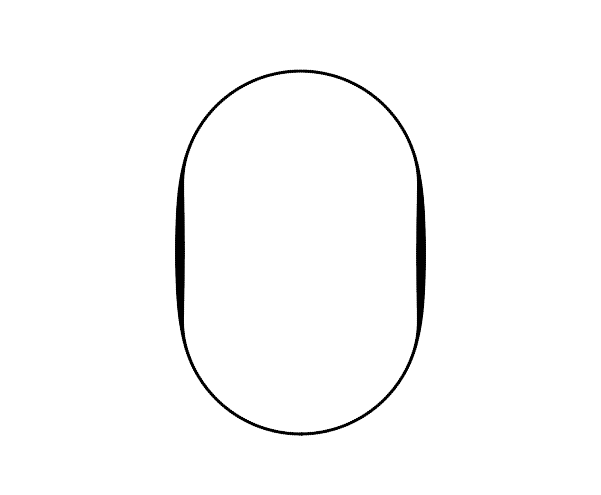}}
                                               &
           {\includegraphics[trim=200 10 200 10,clip,width=0.25\textwidth]{imgs/case_ii_0_2.pdf}}
        \\
            \raisebox{0.1\textwidth}{Case (iii)} &
            \rotatebox{-3}{\includegraphics[trim=200 10 200 10,clip,width=0.25\textwidth]{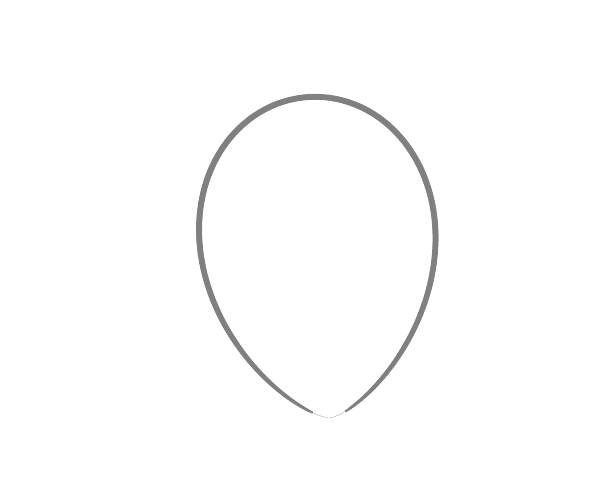}}
                                                 &
            \includegraphics[trim=200 10 200 10,clip,width=0.25\textwidth]{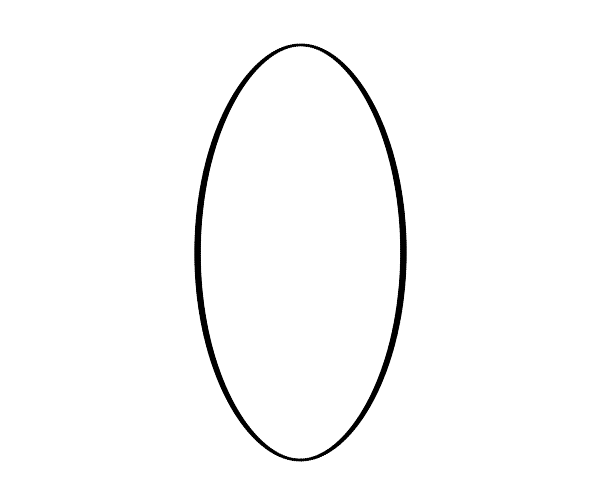}
                                               &
            \includegraphics[trim=200 10 200 10,clip,width=0.25\textwidth]{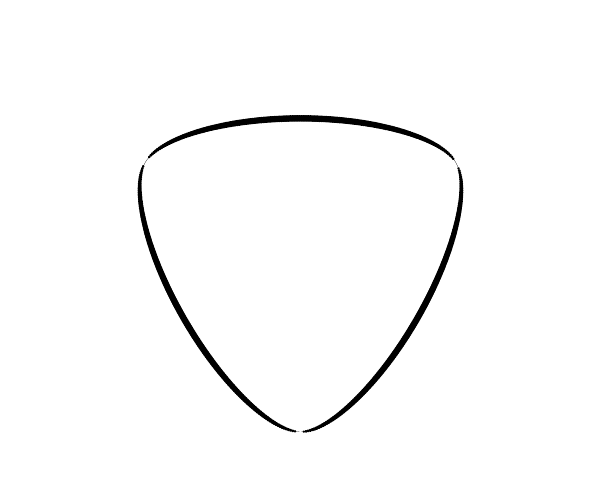}
        \\
            \raisebox{0.1\textwidth}{Case (iv)} &
            {\includegraphics[trim=200 10 200 10,clip,width=0.25\textwidth]{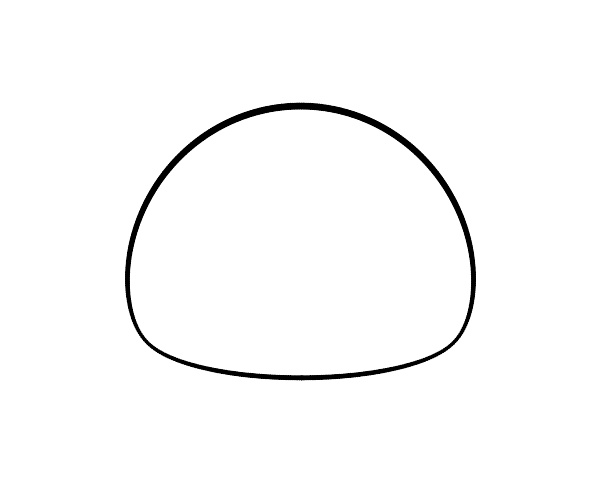}}
                                               &
            \includegraphics[trim=200 10 200 10,clip,width=0.25\textwidth]{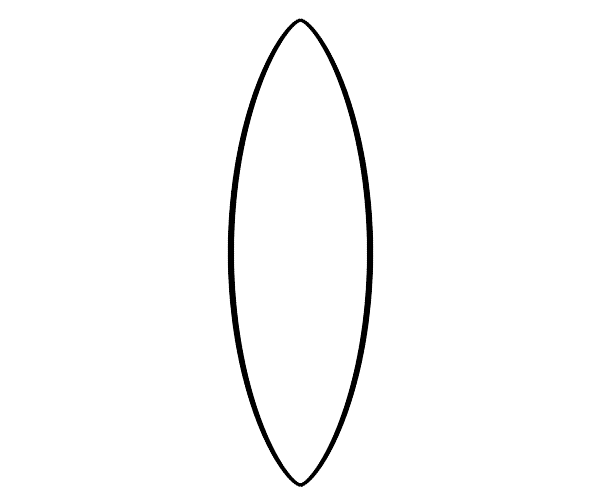}
                                               &
            \includegraphics[trim=200 10 200 10,clip,width=0.25\textwidth]{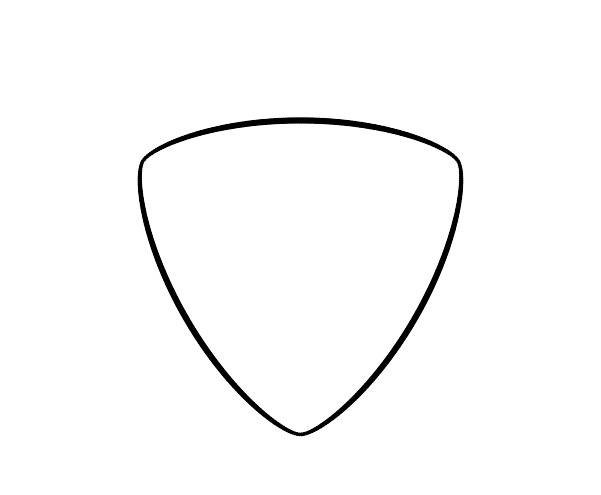}
        \\
            \raisebox{0.1\textwidth}{Case (v)} &
            \includegraphics[trim=200 10 200 10,clip,width=0.25\textwidth]{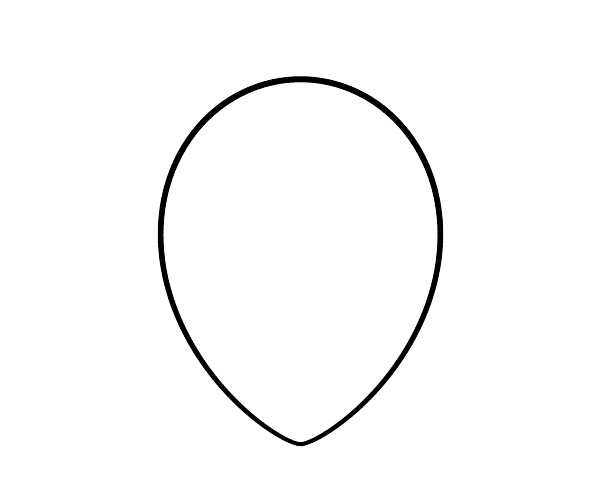}
                                               &
            \includegraphics[trim=200 10 200 10,clip,width=0.25\textwidth]{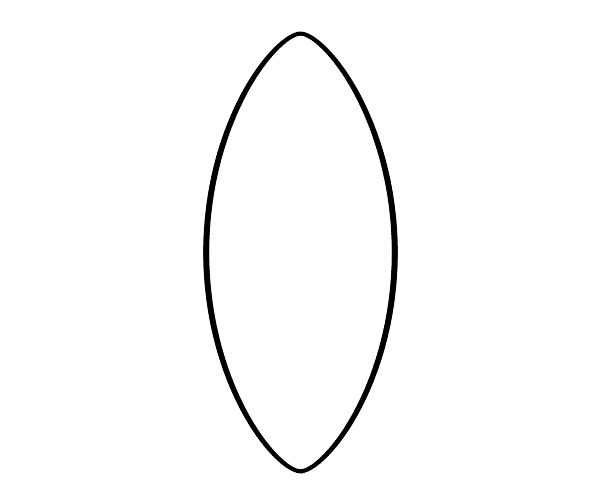}
                                               &
            \includegraphics[trim=200 10 200 10,clip,width=0.25\textwidth]{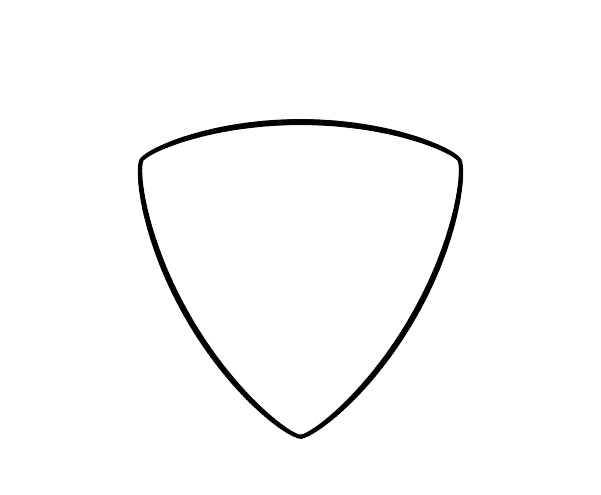}
        \\
            \raisebox{0.1\textwidth}{Case (vi)} &
        \includegraphics[trim=200 10 200 10,clip,width=0.25\textwidth]{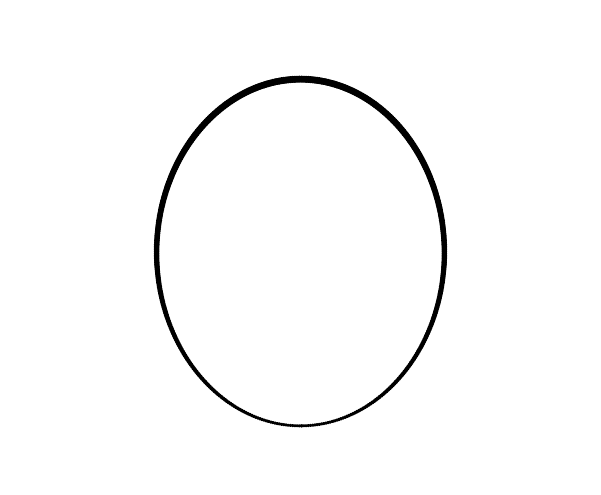}
                                                &
        \includegraphics[trim=200 10 200 10,clip,width=0.25\textwidth]{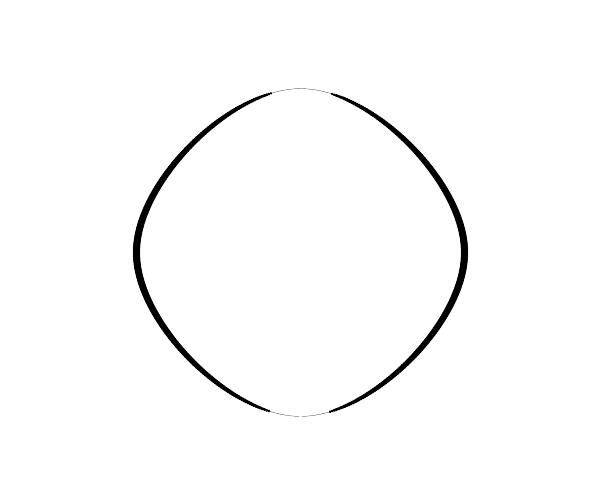}
                                               &
        \includegraphics[trim=200 10 200 10,clip,width=0.25\textwidth]{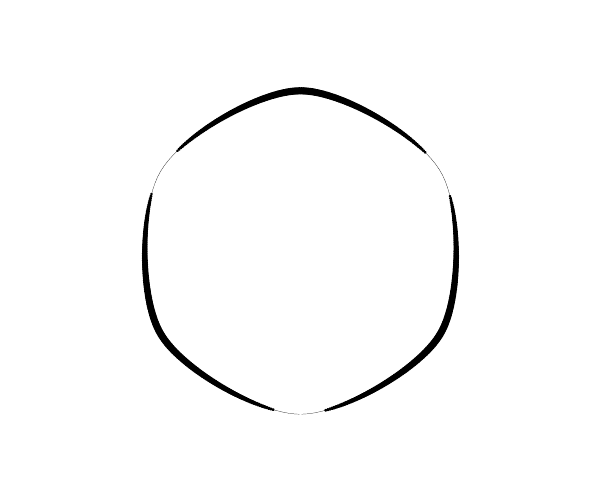}
\end{tabularx}
    \end{center}
    \caption{The shapes corresponding to each case, with $j = 1,2,3$ increasing with each column.
        These correspond to the last point computed on the branches shown in Figure~\ref{fig:numerics bifurcation branches}.
        In Cases (i) to (iii), the shapes in the first column are in gray, as they do not correspond to branches bifurcating from the trivial state, and $j$ is not defined in this case. They are placed in the first column due to their resemblance to shapes obtained for $h < 0$, in Cases (iv) to (vi).
        Thicker lines denote larger values of $\rho$.
        %In Cases (i) to (v) we have $m > 0$ and $\beta$ is increasing locally around the trivial solution: regions of high curvature ``avoid'' high values of $\rho$. In Case (vi) $\beta$ is flat and symmetrical around $\rho_0$, so that the numbers of corners is twice the value of $j$ and is always even. At the corners, $\rho$ alternates between its extrema and varies smoothly in between.
    }
    \label{fig:numerics shapes}
\end{figure}

\clearpage

\section{Conclusion}\label{sec:conclusion}
To describe elastic curves in the plane, we introduced a regularized Canham-Helfrich type functional which includes a density-modulated stiffness $\beta$.
We proved that the associated minimization problem has a solution if the regularization parameter is positive. If not, the problem has no solution in general.
Conditions on the first derivatives of $\beta$ were derived so that the problem has non trivial solutions.
In this case, a bifurcation analysis around the trivial solution was performed, the regularization parameter playing the role of the bifurcation parameter.
A family of both subcritical and supercritical pitchfork bifurcations were found, depending on the choice of $\beta$.
This contrasts with the classical elastic curves, which display supercritical bifurcations only.
An expansion of the energy confirmed that subcritical (resp.\ supercritical) solutions correspond to a gain (resp.\ a loss) of energy compared to the trivial state.
This analysis was completed by numerical continuation of the bifurcating branches, which confirmed the theoretical findings.
Secondary bifurcations and turning points ---found numerically--- testify of the intricate mathematical structure of the model.
In particular no uniqueness should be expected for the minimization problem, except for large regularization.

%%%%%%%%%%%%%%%%%%%%%%%%%%%%%%%%%%%%%%%%%%%%%%%%%%%%%%%%%%%%%%%%%%%%%%%%%%%%
%%%%%%%%%%%%%%%%%%%%%%%%%%%%%%%%%%%%%%%%%%%%%%%%%%%%%%%%%%%%%%%%%%%%%%%%%%%%
\section*{Acknowledgements}
%{\color{red} Ulisse: I also changed the acknowledgements. Some are still missing}.
This work has been supported by the Austrian Science Fund (FWF)
projects F\,65, W\,1245, P\,32788, by the Vienna Science and Technology Fund (WWTF)
through Project MA14-009, and by the Austrian Academy of Sciences via the New Frontier's grant NST 0001.

\addcontentsline{toc}{section}{Acknowledgements}

%%%%%%%%%%%%%%%%%%%%%%%%%%%%%%%%%%%%%%%%%%%%%%%%%%%%%%%%%%%%%

%\section*{References}
%\addcontentsline{toc}{section}{References}
\bibliography{BJSS.2020}
\bibliographystyle{abbrv}

\end{document}